\subjclass[2010]{Primary 16E50, 16U40} 
\keywords{Exchange rings, von Neumann regular rings, semiregular rings, (pure-)injective rings, coprime pair}
\newcommand{\mat}[4]{ \left ( \begin{array}{cc} #1 & #2 \\ #3 &
      #4 \end{array} \right)} 
\newcommand{\pair}[1]{\langle #1 \rangle}
\newcommand{\End}{\operatorname{End}}
\newcommand{\Ext}{\operatorname{Ext}}
\newcommand{\Ker}{\operatorname{Ker}}
\newcommand{\Img}{\operatorname{Im}}
\DeclareMathOperator{\supp}{supp}
\newcommand{\Modr}[1]{\mathrm{Mod}\textrm{-}{#1}}
\newcommand{\Modl}[1]{{#1}\textrm{-}\mathrm{Mod}}
\DeclareMathOperator{\RCP}{RCP}
\DeclareMathOperator{\Z}{Z}
\theoremstyle{plain}
\newtheorem{theorem}{Theorem}[section]
\newtheorem{lemma}[theorem]{Lemma}
\newtheorem{proposition}[theorem]{Proposition}
\newtheorem{corollary}[theorem]{Corollary}
\newtheorem{question}[theorem]{Question}
\newtheorem{definition}[theorem]{Definition}
\newtheorem{remark}[theorem]{Remark}
\newtheorem{example}[theorem]{Example}
\newtheorem{examples}[theorem]{Examples}
\title{STRONGLY EXCHANGE RINGS}
\author{Manuel Cort\'es-Izurdiaga}
\address{Departamento de Matemática Aplicada, Universidad de Málaga, 29071, Málaga, Spain}
\email{mizurdiaga@uma.es} \thanks{The first author is partially supported by the Spanish Government under grants PID2020-113552GB-I00 which include FEDER funds of the EU., and by Junta de Andaluc\'{\i}a under grant P20-00770}
 \author[Guil Asensio]{Pedro A. Guil Asensio}
 \address{Departamento de Matem\'aticas, Universidad
   de Murcia, Murcia, 30100, Spain} \email{paguil@um.es} \thanks{The
   second author is partially supported by the Spanish Government under
   grant PID2020-113206GB-I00/AEI/10.13039/501100011033 which includes FEDER funds of the EU, and by Fundaci\'on S\'eneca of
   Murcia under grant
  19880/GERM/15}
\begin{document}

\begin{abstract}
Two elements $a,b$ in a ring $R$ form a right coprime pair, written $\pair{a,b}$, if $aR+bR=R$. Right coprime pairs have shown to be quite useful in the study of left cotorsion or exchange rings. In this paper, we define the class of strongly right exchange rings in terms of  descending chains of them. We show that they are semiregular and that this class of rings contains left injective, left pure-injective, left cotorsion, local and left continuous rings. This allows us to give a unified study of all these classes of rings in terms of the behaviour of descending chains of right coprime pairs. 
\end{abstract}

\maketitle

\section{Introduction and notation}

The exchange property was introduced by Crawley and Jonsson in 1964 in their study of the decomposition properties of algebraic systems \cite[Definition 3.1]{CJ} and was later extended to arbitrary modules by Warfield in 1972 \cite{Warfield-Exchange}. He defined that a right module $M$ satisfies the (finite) exchange property if for any right module $X$ and decompositions  
$X=M'\oplus Y=\oplus_I N_i$ with $M'\cong M$ (resp., with $I$ finite), there exist submodules $N'_i\leq N_i$ such that $X=M'\oplus(\oplus_I N'_i)$. 
It is well known that the finite exchange property implies the general exchange property for finitely generated modules, but it is not known if this is true for every module. This is one of the oldest open questions in Ring Theory, whose origin can be traced back to the pioneering work of 
Crawley and Jonsson (see \cite[p.807]{CJ}). 
Warfield proved that a module $M$ satisfies the finite exchange property if and only if so does its endomorphism ring. He also proved that this property is left-right symmetric for rings and called them exchange rings \cite{Warfield-Exchange}. Exchange rings were later independently characterized by Goodearl \cite[p. 167]{Goodearl-Warfield} and Nicholson \cite{Nicholson} as those rings $R$ satisfying that for any element $r\in R$, there exists an idempotent $e\in R$ such that $e\in rR$ and $1-e\in (1-r)R$. See \cite{Lam-Equation,Lam-Crash} for a more detailed exposition on this topic and its relation with algebraic equations.

On the other hand, we recall that a pair $a,b$ of elements in a commutative PID, $R$, is called coprime if they do not have common divisors. This is equivalent to say that they satisfy the Bezout identity $ar+bs=1$, for some $r,s \in R$. This definition has been recently extended to arbitrary (not necessarily commutative) rings in a completely different context \cite{GuilHerzog}. Namely, a pair of elements $a,b$ in a ring $R$ is called right coprime, denoted $\pair{a,b}$, when they satisfy the former condition; and an order relation was defined among them: $\pair{a,b}\leq \pair{a',b'}$ if and only if $aR\subseteq a'R$ and $bR\subseteq b'R$. It was shown in \cite{GuilHerzog} that this order relation is a quite useful tool to construct idempotent elements in rings. Using these ideas, the authors have proved that any left cotorsion ring is semiregular (that is, $R$ is von Neumann regular modulo its Jacobson radical $J(R)$ and idempotents in $R/J(R)$ can be lifted to idempotents in $R$, see \cite{Nicholson-Semiregular}) and $R/J(R)$ is left self-injective. In particular, they have given an arithmetic (and probably more conceptual) proof of the fact that endomorphism rings of pure-injective modules enjoy the former properties (see e.g. \cite{ZimmermannZimmermann}).  
 
The main tool in most of the proofs in \cite{GuilHerzog} is that certain descending chains of right coprime pairs (the compatible ones, see Definition \ref{d:CompatibleSystem}) have minimal lower bounds (with respect to the aforementioned order relation), and these lower bounds are of the form $\pair{e,1-e}$ for some idempotent element $e$. Let us note that this fact connects Goodearl and Nicholson characterization of exchange rings with coprime pairs: a ring $R$ is an exchange ring if and only if for every element $r\in R$, 
there exists a minimal right coprime pair below the right coprime pair $\pair{r,1-r}$. This key observation is the inspiration idea of the present paper. We call rings satisfying the above descending property  \textit{right strongly exchange rings} and study their main properties.
We show that the class of right strongly exchange rings contains left cotorsion rings (hence, left self-injective and left pure-injective rings), left perfect rings, local rings and left continuous rings. Moreover, right strongly exchange rings are semiregular. Consequently, this right strongly exchange property allows to unify the treatment of all the above classes of rings, which had been shown to be semiregular as well.
 
Let us outline the contents of this paper. We begin by obtaining in Section 2 the basic properties of right coprime pairs. We also give in this section several examples that show the behaviour of right coprime pairs in different rings, and observe that this behaviour characterize local, left perfect, von Neumann regular and exchange rings (see Proposition~\ref{p:CharacterizationRingsCoprimePairs}). 


In Section 3, we define right strongly exchange rings in terms of the central notion of compatible descending chain of right coprime pairs. By the preceding comments, right strongly exchange rings satisfy the exchange property, although there exist exchange rings which are not right strongly exchange (see Example \ref{e:RegularRingNotDescendingChain}). We also show that the class of  right strongly exchange rings include local, left cotorsion and left continuous rings (Proposition \ref{p:LocalAreStrongly} and Theorems \ref{t:CotorsionAreStrongly} and \ref{t:ContinuousHaveCompatibleLowerBounds}).

We study in Section 4 the main properties of right strongly exchange rings. We prove in Theorem \ref{t:ChainsImpliesSemiregular} that they are semiregular. If we assume the stronger property that compatible descending systems of right coprime pairs have minimal lower bounds, then they are left continuous modulo their Jacobson radical (Theorem \ref{t:StronglyImpliesContinuousModuloTheRadical}). We also give an example showing that the strongly exchange property is not left-right symmetric (Example \ref{e:NonSymmetric}). We close the paper by showing that a ring is semiperfect provided that it has a countable number of idempotents and is strongly exchange (Theorem~\ref{t:semiperfect}).
	
Let us fix some notation. For any set $A$, $|A|$ will denote its cardinality. The first infinite ordinal is denoted by $\omega$. Moreover, $R$ will denote an associative ring with identity that we fix for the whole paper. Module will mean right $R$-module and, when dealing with left modules, we will specify the side where scalars act. We denote by $\Modr R$ and $\Modl R$ the categories of right and left $R$-modules, respectively. As usual, we use the notation $R_R$ (resp. ${_R}R$) when we consider $R$ as a right (resp. left) $R$-module  over itself. Morphisms will act on the opposite side of scalars. Consequently, if $M$, $N$ and $L$ are modules and $f:M \rightarrow N$ and $g:N \rightarrow L$ are homomorphisms of modules, we denote their composition by $gf$ in case $M,N,L \in \Modr R$ and by  $fg$ if $M,N,L \in \Modl R$. We will denote by $J(R)$ the Jacobson radical of $R$. Given a family of modules, $\{M_i\mid i \in I\}$, we treat elements in $\prod_{i \in I}M_i$ as maps $x:I \rightarrow \bigcup_{i \in I}M_i$ with $x(i) \in M_i$ for each $i \in I$. The support of an element $x$ in the product is
\begin{displaymath}
\supp(x) = \{i \in I\mid x(i) \neq 0\}.
\end{displaymath}

Recall that a submodule $K$ of a module $M$ is called superfluous, written $K << M$, if there does not exist a proper submodule $L$ of $M$ with $K+L=M$. Dually, $K$ is called essential in $M$ if there does not exist a non-zero submodule $L$ with $K \cap L = 0$. A family of submodules $\{M_i\mid i \in I\}$ of $M$ is called independent if for every $i \in I$, $M_i \cap \sum_{j \neq i}M_j =0$. Given $X \subseteq M$ and $A \subseteq R$, we denote by $\textbf{l}_M(A)$ and $\textbf{r}_R(X)$ the corresponding annihilators of $A$ in $M$ and of $X$ in $R$, respectively. Notice that, if $e$ is an idempotent element, $\mathbf l_R(e) = R(1-e)$ and $\mathbf r_R(e) = (1-e)R$.

%
%
%
%
%
%
%

\section{Coprime pairs. Basic properties}

We begin this section by recalling the following definition from \cite{GuilHerzog}.

\begin{definition}
A right coprime pair in $R$ is a pair of elements, $a,b$, such that $R=aR+bR$. We denote the coprime pair by $\langle a,b\rangle$.
\end{definition}

We are interested in the cyclic right ideals generated by the elements in the right coprime pair. This is the reason why we define the following equivalence relation between them:
\begin{displaymath}
\pair{a,b} \sim \pair{a',b'} \Leftrightarrow aR = a'R \textrm{ and } bR = b'R
\end{displaymath}
We denote by $\RCP(R)$ the set of all equivalence classes of right coprime pairs under this equivalence relation. From now on, the term "right coprime pair" and the notation $\pair{a,b}$ will refer to one of these equivalence classes. Given a right coprime pair $\pair{a,b}$, any pair of elements $(a',b')$ of $R^2$ such that $\pair{a',b'}=\pair{a,b}$ will be called a \textit{pair of generators} of the coprime pair.

We may define in $\RCP(R)$ an order relation. Given two right coprime pairs $\langle a,b \rangle$ and $\pair{a',b'}$, we will say that
\begin{displaymath}
\pair{a,b} \leq \pair{a',b'} \textrm{ if and only if } aR \leq a'R \textrm{ and } bR \leq b'R.
\end{displaymath}

As we will see later on, this order relation is quite useful to find idempotent elements in $R$.

\begin{examples}\label{e:ExamplesCoprimesPairs}
\begin{enumerate}
\item For any $a \in R$, we have the coprime pair $\langle a,1-a\rangle$. We call this coprime pair \textit{basic}.

\item For any $a \in R$ and any unit $u \in R$, we have the coprime pair $\langle a,u \rangle$. We call this coprime pair \textit{trivial}. Notice that $\pair{0,1} \leq \pair{a,u}$.

\item If $e \in R$ is an idempotent, then $\pair{e,1-e}$ is a basic coprime pair which is easy to check that is minimal in $\RCP(R)$ with respect to $\leq$ (see \cite{GuilHerzog}).

\item Recall that, given $a \in R$, $aR$ is a direct summand of $R_R$ if and only if $a$ is a regular element. I.e., there exists an $x \in R$ with $axa=a$. If $\pair{a,b}$ is a right coprime pair, then $aR$ and $bR$ are direct summands if and only if $a$ and $b$ are regular elements. We call this kind of coprime pairs \textit{regular}. Notice that, in this case, we can find idempotent elements $e$ and $f$ such that $\pair{a,b}=\pair{e,f}$.
\end{enumerate}
\end{examples}

Let us prove some basic characterizations of right coprime pairs.
Recall that a left $R$ module $M$ satisfies the condition (C3) if for any two direct summands, $K$ and $L$, of $M$, $K+L$ is a direct summand provided that $K \cap L = 0$ (see e.g. \cite{MohamedMuller, NicholsonYousif}).

\begin{proposition} \label{p:CharacterizationCoprimePairs}
Let $a,b \in R$. The following assertions are equivalent:
\begin{enumerate}
\item $\pair{a,b}$ is a right coprime pair.

\item The morphism of left $R$-modules, $i:R \rightarrow R \oplus R$ given by $(x)i=(xa,xb)$, is a split monomorphism.

\item There exists a left $R$-module $M$ and two elements $m_1$ and $m_2$ of $M$ such that the morphism $f:R \rightarrow M$ defined by $(1)f = am_1+bm_2$ is a split monomorphism.

\item $\mathbf{l}_R(a) \cap \mathbf{l}_R(b)=0$ and $R(a,b)$ is a direct summand of $R\oplus R$.

\item $\mathbf{l}_R(a) \cap \mathbf{l}_R(b)=0$ and there exist $r,s,x_0,y_0 \in R$ such that $\mat{ra}{rb}{sa}{sb}$ is an idempotent matrix and 
\begin{displaymath}
(x_0,y_0)\mat{ra}{rb}{sa}{sb} = (a,b)
\end{displaymath}

\item $\mathbf{l}_R(a) \cap \mathbf{l}_R(b)=0$ and there exist $r,s \in R$ such that $\mathbf r_R(r) \cap \mathbf r_R(s) = 0$ and $\mat{ra}{rb}{sa}{sb}$ is an idempotent matrix.

\item $\pair{a+J(R),b+J(R)}$ is a right coprime pair in $R/J(R)$.
\end{enumerate}
If $R$ is von Neumann regular, or ${_R}R$ satisfies (C3) and $a$ and $b$ are idempotent elements, the preceding conditions are equivalent to:
\begin{enumerate}
\setcounter{enumi}{7}
\item $\mathbf l_R(a) \cap \mathbf l_R(b) = 0$.
\end{enumerate}
\end{proposition}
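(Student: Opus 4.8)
The plan is to run the cycles $(1)\Rightarrow(2)\Rightarrow(3)\Rightarrow(1)$ and $(1)\Rightarrow(5)\Rightarrow(6)\Rightarrow(1)$, add the equivalences $(1)\Leftrightarrow(4)$ and $(1)\Leftrightarrow(7)$, and finally establish $(8)\Rightarrow(1)$ under each of the two supplementary hypotheses (the reverse implications being contained in $(4)$). The block $(1)\Leftrightarrow(2)\Leftrightarrow(3)$ is routine: if $ar+bs=1$ then $(u,v)p:=ur+vs$ defines a retraction of $i$ in $\Modl R$, which is $(2)$; the implication $(2)\Rightarrow(3)$ is immediate with $M=R\oplus R$; and from a retraction $p$ of $f$ in $(3)$ one reads $1=(1)fp=a(m_1p)+b(m_2p)\in aR+bR$. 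For $(1)\Leftrightarrow(7)$ only $(7)\Rightarrow(1)$ needs a remark, namely that $J(R)$ is superfluous in $R_R$, so $aR+bR+J(R)=R$ forces $aR+bR=R$.

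For $(1)\Leftrightarrow(4)$: since $\Ker i=\mathbf{l}_R(a)\cap\mathbf{l}_R(b)$ and $\Img i=R(a,b)$, condition $(2)$ says exactly that $i$ is injective with direct summand image, which—$R$ being projective—is precisely $(4)$; in particular $(1)$ forces $\mathbf{l}_R(a)\cap\mathbf{l}_R(b)=0$, which I use freely below. For $(1)\Rightarrow(5)$, pick $r,s$ with $ar+bs=1$ and set $E=\mat{ra}{rb}{sa}{sb}$; this is the matrix of the endomorphism $pi$ of $R\oplus R$ in $\Modl R$, with $p$ as above, so $ip=1$ gives $E^2=(pi)(pi)=p(ip)i=pi=E$, while $(a,b)E=((a,b)p)i=(1)i=(a,b)$, so $(x_0,y_0)=(a,b)$ works. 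For $(5)\Rightarrow(6)$, the relation $(x_0,y_0)E=(a,b)$ expands to $(x_0r+y_0s)a=a$ and $(x_0r+y_0s)b=b$, so $x_0r+y_0s-1\in\mathbf{l}_R(a)\cap\mathbf{l}_R(b)=0$; hence $x_0r+y_0s=1$, which forces $\mathbf{r}_R(r)\cap\mathbf{r}_R(s)=0$, and $(6)$ holds with the same $r,s$. For $(6)\Rightarrow(1)$, write $E=\binom{r}{s}(a\; b)$, so $E^2=\binom{r}{s}(ar+bs)(a\; b)$; comparing $E^2=E$ entry by entry and using $\mathbf{r}_R(r)\cap\mathbf{r}_R(s)=0$ gives $(ar+bs)a=a$ and $(ar+bs)b=b$. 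Setting $t=ar+bs$ one checks $t^2=t$, whence $1-t\in\mathbf{l}_R(a)\cap\mathbf{l}_R(b)=0$ and $ar+bs=t=1$.

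For the final part, the unifying point is that it suffices to show $aR+bR$ is a direct summand of $R_R$: then $aR+bR=cR$ with $c^2=c$, and $R(1-c)=\mathbf{l}_R(c)=\mathbf{l}_R(a)\cap\mathbf{l}_R(b)=0$ by $(8)$, so $c=1$. If $R$ is von Neumann regular this is immediate, as every finitely generated right ideal is then a direct summand. If ${_R}R$ satisfies (C3) and $a,b$ are idempotent, then $(8)$ says the direct summands $R(1-a)$ and $R(1-b)$ of ${_R}R$ meet in $0$, so (C3) makes $R(1-a)\oplus R(1-b)$ a direct summand of ${_R}R$; picking orthogonal idempotents $e_1,e_2,e_3$ with $e_1+e_2+e_3=1$, $Re_1=R(1-a)$ and $Re_2=R(1-b)$, one checks (using that $e_1$ and $1-a$ are both idempotent) that $e_1a=0$ and $a(1-e_1)=1-e_1$, hence $aR=(1-e_1)R=e_2R+e_3R$; symmetrically $bR=e_1R+e_3R$, and so $aR+bR=e_1R+e_2R+e_3R=R$.

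The only delicate points I anticipate are the entrywise bookkeeping with the idempotent matrix in $(6)\Rightarrow(1)$ and the passage from the one-sided identity $Re_1=R(1-a)$ to the two-sided relations $e_1a=0$, $a(1-e_1)=1-e_1$ for idempotents; the rest is formal.
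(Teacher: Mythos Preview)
Your proof is correct and follows essentially the same strategy as the paper. The only differences worth noting are organizational. First, you run the cycle $(1)\Rightarrow(5)\Rightarrow(6)\Rightarrow(1)$, obtaining $(1)\Rightarrow(5)$ slickly by recognizing the matrix $E$ as the endomorphism $pi$; the paper instead proves $(4)\Leftrightarrow(5)$ and $(1)\Leftrightarrow(6)$ separately, writing out the four entry identities of $E^2=E$ explicitly rather than factoring $E=\binom{r}{s}(a\ b)$. Second, for the (C3) case of $(8)\Rightarrow(1)$ the paper simply cites \cite[Lemma~12]{ZimmermannZimmermann} after observing that $R(1-a)\oplus R(1-b)$ is a direct summand, whereas you supply the content of that lemma by hand via the orthogonal idempotents $e_1,e_2,e_3$; and for the von Neumann regular case the paper argues on the left (the cyclic submodule $R(a,b)\leq R\oplus R$ is a direct summand, then invoke $(4)$) while you argue on the right (the finitely generated right ideal $aR+bR$ is a direct summand). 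One minor wording point: in your $(6)\Rightarrow(1)$, the clause ``$t^2=t$, whence $1-t\in\mathbf l_R(a)\cap\mathbf l_R(b)$'' reads as a non-sequitur; the conclusion follows directly from the identities $ta=a$, $tb=b$ you have already derived, and the idempotence of $t$ is superfluous.
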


\begin{proof}
(1) $\Rightarrow$ (2). Suppose that $\pair{a,b}$ is a right coprime pair. Then the morphism $k:R \oplus R \rightarrow R$ given by $(x,y)k = xr+ys$ is a splitting of $i$, where $r$ and $s$ are scalars satisfying $1=ar+bs$.

(2) $\Rightarrow$ (3). Trivial.

(3) $\Rightarrow$ (1). Note that if $g$ splits $f$, then
\begin{displaymath}
1 = (1)fg = a(m_1)g+b(m_2)g,
\end{displaymath}
which implies that $\pair{a,b}$ is a right coprime pair.

(2) $\Leftrightarrow$ (4). The equivalence follows from the fact that $\Ker i = \mathbf l_R(a) \cap \mathbf l_R(b)$ and $\Img i = R(a,b)$.

(4) $\Rightarrow$ (5). If $R(a,b)$ is a direct summand of $R \oplus R$, there exists an idempotent endomorphism $h$ of $R \oplus R$ such that $\Img h = R(a,b)$. The endomorphism $h$ is of the form
\begin{displaymath}
(x,y)h=(x,y)\mat{u}{v}{w}{t}
\end{displaymath}
for some idempotent matrix $\mat{u}{v}{w}{t}$ with entries in $R$. Using that $(1,0)h$ and $(0,1)h$ belong to $R(a,b)$ we can find $r,s \in R$ such that $u=ra,v=rb,w=sa,t=sb$. Finally, since $(a,b)\in \Img h$, there exist $x_0,y_0 \in R$ such that
\begin{displaymath}
(x_0,y_0)\mat{ra}{rb}{sa}{sb} = (a,b)
\end{displaymath}

(5) $\Rightarrow$ (4). The endomorphism $h$ of $R \oplus R$ given by
\begin{displaymath}
(x,y)h=(x,y)\mat{ra}{rb}{sa}{sb}
\end{displaymath}
for all $x,y \in R$, is idempotent with $\Img h = R(a,b)$. Then, $R(a,b)$ is a direct summand of $R \oplus R$.

(1) $\Rightarrow$ (6). Since $i$ is injective, $0 = \Ker i = \mathbf l_R(a) \cap \mathbf l_R(b)$. Now take $r,s \in R$ such that $ar+bs=1$ and note that
\begin{displaymath}
\mat{ra}{rb}{sa}{sb}^2 = \mat{r(ar+bs)a}{r(ar+bs)b}{s(ar+bs)a}{s(ar+bs)b} = \mat{ra}{rb}{sa}{sb}
\end{displaymath}
Moreover, since $\pair{r,s}$ is a left coprime pair, it satisfies the dual of (2) and $j:R \rightarrow R \oplus R$ given by $j(x) = (rx,sx)$ is injective. This means that $\mathbf r_R(r) \cap \mathbf r_R(s) = 0$.

(6) $\Rightarrow$ (1). Using that the matrix is idempotent we obtain the identities
\begin{eqnarray*}
(ra)^2+rbsa=ra\\
rarb+rbsb=rb\\
sara+sbsa=sa\\
sarb+(sb)^2=sb
\end{eqnarray*}
The first two identities together with $\mathbf l_R(a) \cap \mathbf l_R(b) = 0$ give that $rar+rbs-r=0$. Similarly, from the last two identities we get that $sar+sbs-s=0$. Now, using that $\mathbf r_R(r) \cap \mathbf r_R(s) = 0$ we obtain that $ar+bs=1$. That is, $\pair{a,b}$ is a right coprime pair.

(1) $\Rightarrow$ (7). This is trivial. 

(7) $\Rightarrow$ (1). Note that (7) implies that $aR+bR+J(R)=R$. Then, by Nakayama's lemma, $aR+bR=R$ as well.

(4) $\Rightarrow$ (8). Trivial.

(8) $\Rightarrow$ (1). If $R$ is von Neumann regular, then $R(a,b)$ is always a direct summand of $R \oplus R$. Consequently, $\pair{a,b}$ is a right coprime pair by (4).
\medskip

Finally, if ${_R}R$ satisfies (C3) and $a$ and $b$ are idempotents satisfying (5), then $R(1-a) \cap R(1-b) = 0$. By condition (C3), $R(1-a)+R(1-b)$ is a direct summand of $R$. Therefore, $\pair{a,b}$ is a right coprime pair by \cite[Lemma 12]{ZimmermannZimmermann}.
\end{proof}



By a \textit{minimal coprime pair} we mean a right coprime pair which is minimal in $\RCP(R)$. We will use the following description of minimal coprime pairs (see \cite[Proposition 3]{GuilHerzog}).

\begin{proposition}\label{p:MinimalCoprimePairs}
Let $\pair{a,b}$ be a right coprime pair. The following assertions are equivalent:
\begin{enumerate}
\item The coprime pair $\pair{a,b}$ is minimal.

\item There exists an idempotent element $e$ such that $\pair{a,b}=\pair{e,1-e}$.

\item There exist $r,s \in R$ such that $a=ara$, $b=bsb$ (i. e., $\pair{a,b}$ is regular and $arbs=bsar=0$).

\end{enumerate}
\end{proposition}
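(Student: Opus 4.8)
The plan is to prove $(2)\Leftrightarrow(3)$ by direct computation and $(1)\Leftrightarrow(2)$ separately, the implication $(1)\Rightarrow(2)$ carrying essentially all the weight; the cycle then closes via $(3)\Rightarrow(2)\Rightarrow(1)$. The implication $(2)\Rightarrow(1)$ is just Examples~\ref{e:ExamplesCoprimesPairs}(3): if $\pair{a',b'}\le\pair{e,1-e}$ is coprime, write $a'=ea'$ and $b'=(1-e)b'$, and multiply an identity $a'r+b's=1$ on the left by $e$ to get $e=a'r$, whence $eR=a'R$; dually $(1-e)R=b'R$, so $\pair{a',b'}=\pair{e,1-e}$ and the latter is minimal.

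For $(2)\Leftrightarrow(3)$: given $\pair{a,b}=\pair{e,1-e}$, from $aR=eR$ pick $v$ with $av=e$; then $ava=ea=a$, and symmetrically $bwb=b$ for some $w$ with $bw=1-e$, while $arbs=(av)(bw)=e(1-e)=0$ and $bsar=0$, so $(3)$ holds with $r:=v$, $s:=w$. Conversely, from $a=ara$ and $b=bsb$ the elements $ar,bs$ are idempotent (indeed $(ar)^2=(ara)r=ar$, and likewise for $bs$) with $arR=aR$ and $bsR=bR$, and the orthogonality $arbs=bsar=0$ forces $arR\cap bsR=0$: any $z$ there satisfies $z=arz$ and $z=bsz'$, so $z=ar(bsz')=(arbs)z'=0$. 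Hence $R=arR\oplus bsR=aR\oplus bR$, and the idempotent $e\in R$ describing the projection of $R_R$ onto $aR$ along $bR$ (using $\End(R_R)=R$ acting by left multiplication) satisfies $eR=aR$ and $(1-e)R=bR$, i.e.\ $\pair{a,b}=\pair{e,1-e}$.

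The substantive part is $(1)\Rightarrow(2)$. I would first \emph{trim the generators}: fixing $r,s$ with $ar+bs=1$, one checks that $\pair{ara,b}$ and $\pair{a,bsb}$ are again coprime (because $a=ara+bsa$ and $b=arb+bsb$) and lie below $\pair{a,b}$, so minimality forces $araR=aR$ and $bsbR=bR$, hence $arR=aR$ and $bsR=bR$. Thus $\pair{a,b}=\pair{ar,bs}$ with $ar+bs=1$, and after renaming we may assume $b=1-a$. The point of this reduction is that $a$ and $1-a$ now commute, which lets one manufacture enough coprime pairs below $\pair{a,1-a}$: both $\pair{a,(1-a)^2}$ and, for every $d\in R$, the pair $\pair{a(1-(1-a)d),1-a}$ are coprime and $\le\pair{a,1-a}$ — the latter because $a=a(1-(1-a)d)+(1-a)ad$ with $(1-a)ad=a(1-a)d\in aR$. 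Minimality yields $(1-a)^2R=(1-a)R$, say $1-a=(1-a)^2y$, and $a(1-(1-a)d)R=aR$ for all $d$. Setting $w:=1-(1-a)y$, one gets $(1-a)w=0$, hence $w=aw$; taking $d=y$ above gives $awR=aR$, so $wR=aR$ and in particular $a=wm$ for some $m$; then $a^2=(aw)m=wm=a$, so $a$ is idempotent and $\pair{a,b}=\pair{a,1-a}=\pair{e,1-e}$ with $e:=a$ (that is, $e=ar$ before renaming).

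The step I expect to be the real obstacle is not any isolated computation but seeing the overall shape of $(1)\Rightarrow(2)$: recognising that one should first trim to a basic pair $\pair{a,1-a}$ and then exploit the commutation of $a$ with $1-a$ to produce the one-parameter family $\pair{a(1-(1-a)d),1-a}\le\pair{a,1-a}$. It is a single instance of this family ($d=y$), combined with the identity $1-a=(1-a)^2y$ coming from minimality applied to $\pair{a,(1-a)^2}$, that upgrades $\pair{a,1-a}$ to an idempotent pair.
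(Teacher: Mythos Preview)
Your proof is correct. The implications $(2)\Rightarrow(1)$, $(2)\Rightarrow(3)$, and $(3)\Rightarrow(2)$ match the paper's computations essentially line for line (the paper closes the cycle as $(1)\Rightarrow(2)\Rightarrow(3)\Rightarrow(1)$ rather than your $(1)\Leftrightarrow(2)$, $(2)\Leftrightarrow(3)$, but the content is the same).

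The genuine difference is in $(1)\Rightarrow(2)$. The paper reduces, as you do, to the basic pair $\pair{ar,1-ar}$ with $ar+bs=1$, but then simply invokes \cite[Proposition~3]{GuilHerzog} to conclude that $ar$ is idempotent; it does not reproduce that argument. You instead give a self-contained proof of this step: after renaming, minimality applied to $\pair{a,(1-a)^2}$ yields $1-a=(1-a)^2y$, and minimality applied to the family $\pair{a(1-(1-a)d),1-a}$ at $d=y$ then forces $a^2=a$. This is more than the paper itself provides and makes the proposition independent of the external reference, at the cost of a slightly longer argument.
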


\begin{proof}
In order to prove (1) $\Rightarrow$ (2), suppose that $\pair{a,b}$ is a minimal coprime pair and write $1=ar+bs$. Then $\pair{ar,bs} \leq \pair{a,b}$ and, since $\pair{a,b}$ is minimal, $\pair{ar,bs}$ is also minimal. By \cite[Proposition 3]{GuilHerzog} both $ar$ and $bs$ are idempotents. 

(2) $\Rightarrow$ (3). Write $e=ar$, $a=er'$, $1-e=bs$ and $b=(1-e)s'$. Then $a=ara$, $b=bsb$ and $arbs=bsar=0$.

(3) $\Rightarrow$ (1). By (3), $ar$ and $bs$ are orthogonal idempotents with $arR=aR$ and $bsR=bR$. Then $arR \oplus bsR=R$. This means that $\pair{ar,bs} = \pair{a,b}$ is a minimal right coprime pair.
\end{proof}

We obtain now some characterizations of rings $R$ in terms of the properties of their poset $\RCP(R)$. Recall from the introduction that $R$ is called an \textit{exchange ring} if $R$ has the finite exchange property as a right (equivalently, left) module, that is, for any module $X$ and decompositions
\begin{displaymath}
X = M \oplus N = A \oplus B
\end{displaymath}
with $M \cong R_R$, there exist submodules $A' \leq A$ and $B' \leq B$ such that $X=M \oplus A' \oplus B'$. Moreover, $R$ is said to be \textit{right quasi-duo} if every maximal right ideal is a left ideal. 

\begin{proposition}\label{p:CharacterizationRingsCoprimePairs} Let $R$ be a ring. Then:
\begin{enumerate}
\item $R_R$ is indecomposable if and only if $(\RCP(R),\leq)$ has exactly two minimal elements.

\item The following assertions are equivalent:
\begin{enumerate}
\item $R$ is local.
\item Every right coprime pair is trivial.

\item The following two conditions are satisfied:
\begin{enumerate}
\item $\RCP(R)$ has exactly two minimal elements.

\item For each $\pair{a,b} \in \RCP(R)$, there exists a minimal right coprime pair $\pair{c,d}$ such that $\pair{c,d} \leq \pair{a,b}$.
\end{enumerate}
\end{enumerate}

\item $R$ is left perfect if and only if $R$ has DCC on right coprime pairs.

\item $R$ is von Neumann regular if and only if every right coprime pair is regular.

\item $R$ is an exchange ring if and only if for every right coprime pair $\pair{a,b}$ there exists a minimal right coprime pair $\pair{c,d}$ with $\pair{c,d} \leq \pair{a,b}$.

\item $R$ is right quasi-duo if and only if every left coprime pair is a right coprime pair.
\end{enumerate}
\end{proposition}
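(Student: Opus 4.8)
The plan is to treat the six assertions in turn; each will follow from the description of minimal coprime pairs in Proposition~\ref{p:MinimalCoprimePairs} together with one standard ring-theoretic fact, and throughout I will use that for an idempotent $e$ one has $eR=R$ only if $e=1$, and $(1-e)R=R$ only if $e=0$. In particular $e\mapsto\pair{e,1-e}$ is a bijection from the idempotents of $R$ onto the minimal elements of $\RCP(R)$ --- surjectivity being Proposition~\ref{p:MinimalCoprimePairs}, and injectivity because for idempotents $e,f$ the relations $eR=fR$ and $(1-e)R=(1-f)R$ already force $e=f$. Hence for (1) the condition ``$\RCP(R)$ has exactly two minimal elements'' is equivalent to ``$0$ and $1$ are the only idempotents of $R$'', which is indecomposability of $R_R$. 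For (4): if $R$ is von Neumann regular then every element is regular, so every right coprime pair is regular by Example~\ref{e:ExamplesCoprimesPairs}(4); conversely, applying the hypothesis to the basic pair $\pair{a,1-a}$ shows every $a\in R$ is regular. For (3): a descending chain in $\RCP(R)$ stabilises exactly when both of its induced chains of principal right ideals stabilise, and conversely any descending chain $c_1R\supseteq c_2R\supseteq\cdots$ of principal right ideals is realised by the chain $\pair{c_1,1}\geq\pair{c_2,1}\geq\cdots$ in $\RCP(R)$; thus DCC on $\RCP(R)$ is equivalent to DCC on principal right ideals, which by a classical theorem of Bass is equivalent to $R$ being left perfect. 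For (5) I use the Nicholson--Goodearl characterisation: if $R$ is an exchange ring and $\pair{a,b}$ is right coprime, write $1=ar+bs$, put $t=ar$ so that $1-t=bs$, and choose an idempotent $e$ with $e\in tR\subseteq aR$ and $1-e\in(1-t)R\subseteq bR$; then $\pair{e,1-e}\leq\pair{a,b}$ is minimal by Proposition~\ref{p:MinimalCoprimePairs}. Conversely, a minimal coprime pair below the basic pair $\pair{r,1-r}$ has the form $\pair{e,1-e}$ with $e\in rR$ and $1-e\in(1-r)R$, which is precisely the exchange condition for $r$.

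For (2) I will prove (a)$\Rightarrow$(c)$\Rightarrow$(b)$\Rightarrow$(a). In (a)$\Rightarrow$(c): a coprime pair over a local ring cannot have both entries in $J(R)$, so one entry is a unit and the pair is trivial; hence (c)(ii) holds because $\pair{1,0}\leq\pair{a,b}$ when $aR=R$ and $\pair{0,1}\leq\pair{a,b}$ when $bR=R$, while (c)(i) holds because a local ring is indecomposable, using (1). In (c)$\Rightarrow$(b): by (1), condition (c)(i) makes $R_R$ indecomposable, so $\pair{0,1}$ and $\pair{1,0}$ are the only minimal coprime pairs; by (c)(ii) every coprime pair then lies above one of them, which forces it to be trivial. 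In (b)$\Rightarrow$(a): a nontrivial idempotent $e$ would give a non-trivial coprime pair $\pair{e,1-e}$, so $R$ has no nontrivial idempotents; consequently $R$ is Dedekind-finite, since $ab=1$ with $ba\neq1$ would make $ba$ a nontrivial idempotent; and then triviality of each basic pair $\pair{a,1-a}$ tells us that $a$ or $1-a$ is right invertible, hence a unit by Dedekind-finiteness, so $R$ is local by the standard fact that a ring in which $a$ or $1-a$ is always a unit has a unique maximal right ideal.

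For (6): if $R$ is right quasi-duo and $\pair{a,b}$ is a left coprime pair with $aR+bR\neq R$, then $a$ and $b$ lie in a maximal right ideal $\m$, which is two-sided, so $R=Ra+Rb\subseteq\m$, a contradiction; hence every left coprime pair is right coprime. Conversely, suppose $R$ is not right quasi-duo and pick a maximal right ideal $\m$ together with $r\in R$ and $m\in\m$ such that $rm\notin\m$; then $rmR+\m=R$, so $1=rms+m'$ for some $s\in R$ and $m'\in\m$, and rewriting this as $1=r(ms)+m'$ --- where now both $ms$ and $m'$ lie in $\m$ --- exhibits $\pair{ms,m'}$ as a left coprime pair that is not right coprime.

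The genuinely routine parts are (1), (3), (4) and (5). I expect the two main obstacles to be the implication (b)$\Rightarrow$(a) of (2), where the decisive point is to rule out right-invertible non-units (by turning such an element into a nontrivial idempotent $ba$) so that the basic pairs $\pair{a,1-a}$ can be fed into the standard local-ring criterion, and the backward direction of (6), where the key manoeuvre is the rewriting $1=rms+m'=r(ms)+m'$ that regroups the product so as to keep the first entry inside the maximal right ideal $\m$.
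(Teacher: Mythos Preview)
Your argument is correct throughout. Parts (1), (3), (4) and (5) match the paper's reasoning almost verbatim (the paper simply writes ``Trivial'' for (4) and invokes \cite[Theorem 2.1]{Nicholson} for (5) exactly as you do). The differences worth noting are in (2) and (6).

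For (2), the paper argues (a)$\Leftrightarrow$(b) in one line by observing that ``$R$ is local'' is equivalent to ``every proper right ideal is superfluous'', which immediately translates into the triviality of all coprime pairs; it then does (b)$\Rightarrow$(c) trivially and (c)$\Rightarrow$(a) by writing $1=y+k$ with $y\in I$, $k\in K$ for any two right ideals with $I+K=R$. Your cycle (a)$\Rightarrow$(c)$\Rightarrow$(b)$\Rightarrow$(a) is a genuine alternative: the step (b)$\Rightarrow$(a) via Dedekind-finiteness (forcing right-invertible elements to be units, then using the ``$a$ or $1-a$ is a unit'' criterion) is longer but entirely self-contained, whereas the paper's route is shorter because it leans on the superfluous-ideal characterisation of local rings. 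Both are fine; the paper's is more economical, yours avoids quoting that characterisation.

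For (6), the paper does not prove anything: it cites \cite[Theorem 3.2]{LamDugas}. Your direct argument --- pushing $a,b$ into a two-sided maximal right ideal for the forward direction, and for the converse manufacturing $1=r(ms)+m'$ with $ms,m'\in\m$ from a non-two-sided maximal right ideal --- is a clean elementary proof that actually supplies what the paper outsources. This is a net gain in self-containment with no loss of rigour.
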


\begin{proof}
(1). Note first that the right coprime pairs $\pair{1,0}$ and $\pair{0,1}$ are always minimal in $\RCP(R)$. 

Suppose now that $R_R$ is indecomposable and let $\pair{a,b} \in \RCP(R)$ be minimal. By Proposition \ref{p:MinimalCoprimePairs}, $aR \oplus bR=R$. Since $R$ is indecomposable, $aR=R$ or $bR=R$. In the first case, $\pair{1,0} = \pair{a,b}$. And, in the second, $\pair{0,1} = \pair{a,b}$.

Assume now that $\pair{1,0}$ and $\pair{0,1}$ are the only minimal elements of $\RCP(R)$ and let $e \in R$ be an idempotent. As  $\pair{e,1-e}$ is a minimal right coprime pair in $\RCP(R)$ by Proposition \ref{p:MinimalCoprimePairs}, we get that  $\pair{e,1-e}=\pair{1,0}$ or $\pair{e,1-e} = \pair{0,1}$, which means that $e \in \{0,1\}$. Thus, $R_R$ is indecomposable.

(2). (a) $\Leftrightarrow$ (b) follows from the fact that $R$ is local if and only if every proper right ideal of $R$ is superfluous. 

(b) $\Rightarrow$ (c). Trivial, since  the hypotheses imply that $\pair{0,1} \leq \pair{a,b}$ or $\pair{1,0} \leq \pair{a,b}$
for any right coprime pair $\pair{a,b}$. 

(c) $\Rightarrow$ (a). First, note that $\pair{1,0}$ and $\pair{0,1}$ are the only minimal elements in $\RCP(R)$. Choose two right ideals $I$ and $K$ of $R$ such that $I+K=R$. Then $1=y+k$ for some $y \in I$ and $k \in K$. By hypothesis, $\pair{1,0} \leq \pair{y,k}$ or $\pair{0,1} \leq \pair{y,k}$ which implies that $\pair{y,k}$ is trivial. This means that $I=R$ or $K=R$ and, consequently, that $R$ is local.

(3) If $\pair{a_0,b_0} \geq \pair{a_1,b_1} \geq \pair{a_2,b_2} \geq \cdots$ is a countable descending chain of right coprime pairs, then, since $R$ is left perfect, both chains $a_0R \geq a_1R \geq \cdots$ and $b_0R \geq b_1R \geq \cdots$ stabilize. In particular, there exist an $n < \omega$ such that $\pair{a_n,b_n} = \pair{a_{n+k}b_{n+k}}$ for each $k < \omega$.

Conversely, if $R$ has DCC on right coprime pairs and $a_0 R \geq a_1 R \geq a_2 R \geq \cdots$ is a descending chain of cyclic right ideals, then the chain of right coprime pairs
\begin{displaymath}
\pair{a_0,1} \geq \pair{a_1,1} \geq \pair{a_2,1} \geq \cdots
\end{displaymath}
stabilizes, which implies that there exists an $n < \omega$ such that $a_nR = a_{n+k}R$ for each $k < \omega$.

(4) Trivial.

(5) Suppose that $R$ is an exchange ring and let $\pair{a,b}$ be a right coprime pair. Writing $1=ar+bs$ for some $r,s \in R$, we get that $\pair{ar,1-ar} \leq \pair{a,b}$. Now, by \cite[Theorem 2.1]{Nicholson}, there exists an idempotent $e$ with $eR \leq arR$ and $(1-e)R \leq (1-ar)R$. By Proposition \ref{p:MinimalCoprimePairs}, $\pair{e,1-e}$ is a minimal right coprime pair which satisfies $\pair{e,1-e} \leq \pair{a,b}$.

Conversely, given $x \in R$, there exists a minimal right coprime pair $\pair{c,d}$ with $\pair{c,d} \leq \pair{x,1-x}$ . By Proposition \ref{p:MinimalCoprimePairs}, there exists an idempotent $e$ such that $\pair{c,d} = \pair{e,1-e}$. Again by \cite[Theorem 2.1]{Nicholson}, this implies that $R$ is an exchange ring.

(6) This is proved in \cite[Theorem 3.2]{LamDugas}.
\end{proof}

\begin{remark}
Note that $R$ is local if and only if $R$ is exchange and $\RCP(R)$ has exactly two minimal elements.
\end{remark}

\section{Strongly exchange rings. Examples}

As we mentioned in the introduction, one of the key ingredients in \cite{GuilHerzog} is that certain descending chains of right coprime pairs have lower bounds. In this section, we give the central notion of compatible descending chain of right coprime pairs, and study the lower bounds of them.

\begin{definition}\label{d:CompatibleSystem}
A compatible descending chain of right coprime pairs is a chain of right coprime pairs, $\{p_\alpha\mid \alpha < \kappa\}$, such that, for each $\alpha < \kappa$, there exists a pair of generators $(a_\alpha,b_\alpha)$ of $p_\alpha$, and families of scalars $\{r_{\alpha\beta}\mid \alpha < \beta\}$ and $\{s_{\alpha\beta} \mid \alpha < \beta\}$, satisfying the following two conditions  for each ordinals $\alpha < \gamma < \beta$ with $\beta<\kappa$.
\begin{enumerate}
\item $a_\beta=a_\alpha r_{\alpha\beta}$ and $b_\beta=b_\alpha s_{\alpha\beta}$;

\item $r_{\alpha\beta}=r_{\alpha\gamma}r_{\gamma\beta}$ and $s_{\alpha\beta}=s_{\alpha\gamma}s_{\gamma\beta}$.
\end{enumerate}
\end{definition}

From now on, if $\{\pair{a_\alpha,b_\alpha}\mid \alpha<\kappa\}$ is a compatible descending chain of right coprime pairs, we will assume that $(a_\alpha,b_\alpha)$ is the pair of generators satisfying the compatibility condition of the preceding definition.

Of course, we could have defined descending chains with the index set being a totally ordered set instead of an ordinal. However, since every totally ordered set contains a cofinal well ordered subset (see e.g.\cite[Theorem 36]{Roitman}), we can always assume that the index set of the chain is a well ordered set.

We are interested in studying when these chains have lower bounds in $\RCP(R)$. A \textit{minimal lower bound} of a chain is a lower bound of the chain that is a minimal element in $\RCP(R)$.

\begin{proposition}\label{p:PropertiesChainsCoprimePairs}
Let $\{\pair{a_\alpha,b_\alpha} \mid \alpha < \kappa\}$ be a descending chain of right coprime pairs. Then:
\begin{enumerate}
\item The chain has a lower bound if and only if
\begin{displaymath}
\bigcap_{\alpha < \kappa} a_\alpha R + \bigcap_{\alpha < \kappa}b_\alpha R = R
\end{displaymath}

\item If $\pair{a_\alpha,b_\alpha}$ is regular for each $\alpha < \kappa$, then the chain is compatible. In particular, every descending chain of right coprime pairs in a von Neumann regular ring is compatible.

\item If $R$ is von Neumann regular, the following are equivalent:
\begin{enumerate}
\item The chain has a lower bound.

\item There exist $a,b\in R$ with $\mathbf l_R(a) \cap \mathbf l_R(b) = 0$, $\sum_{\alpha < \kappa}\mathbf l_R(a_\alpha) \leq \mathbf l_R(a)$ and $\sum_{\alpha < \kappa} \mathbf l_R(b_\alpha) \leq \mathbf l_R(b)$.
\end{enumerate}
\end{enumerate}
\end{proposition}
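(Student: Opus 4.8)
The three parts are of increasing difficulty, so I would handle them in order. For part~(1), the plan is to use the characterization of right coprime pairs via intersections of cyclic ideals directly. If $\pair{c,d}$ is a lower bound of the chain, then $cR\subseteq a_\alpha R$ and $dR\subseteq b_\alpha R$ for every $\alpha$, so $cR\subseteq\bigcap_\alpha a_\alpha R$ and $dR\subseteq\bigcap_\alpha b_\alpha R$; since $cR+dR=R$ we get the displayed equality. Conversely, if $\bigcap_\alpha a_\alpha R+\bigcap_\alpha b_\alpha R=R$, pick $c\in\bigcap_\alpha a_\alpha R$ and $d\in\bigcap_\alpha b_\alpha R$ with $c+d=1$; then $\pair{c,d}$ is a right coprime pair (it satisfies the trivial Bezout identity $c\cdot 1+d\cdot 1=1$), and $cR\subseteq a_\alpha R$, $dR\subseteq b_\alpha R$ for all $\alpha$, so $\pair{c,d}\le\pair{a_\alpha,b_\alpha}$ for all $\alpha$. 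This is essentially just unwinding the definitions of $\le$ and of a coprime pair.

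For part~(2), the point is to manufacture the compatibility data $(a_\alpha,b_\alpha)$, $\{r_{\alpha\beta}\}$, $\{s_{\alpha\beta}\}$ from regularity. Since each $\pair{a_\alpha,b_\alpha}$ is regular, by Examples~\ref{e:ExamplesCoprimesPairs}(4) (or Proposition~\ref{p:MinimalCoprimePairs}) we may assume each pair of generators consists of idempotents, say $a_\alpha=e_\alpha$, $b_\alpha=f_\alpha$ with $e_\alpha,f_\alpha$ idempotent, and $e_\alpha R\oplus f_\alpha R=R$. From $\pair{a_\beta,b_\beta}\le\pair{a_\alpha,b_\alpha}$ we have $e_\beta R\subseteq e_\alpha R$, hence $e_\alpha e_\beta=e_\beta$, so one sets $r_{\alpha\beta}:=e_\alpha e_\beta$ (which indeed satisfies $e_\alpha r_{\alpha\beta}=e_\alpha e_\beta=e_\beta$ because... wait — we need $e_\alpha r_{\alpha\beta}=e_\beta$, and $e_\alpha(e_\alpha e_\beta)=e_\alpha e_\beta=e_\beta$, good). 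For the cocycle condition~(2), with $\alpha<\gamma<\beta$ one checks $r_{\alpha\gamma}r_{\gamma\beta}=e_\alpha e_\gamma e_\gamma e_\beta=e_\alpha e_\gamma e_\beta=e_\alpha e_\beta=r_{\alpha\beta}$, using $e_\gamma e_\beta=e_\beta$. The same works on the $f$-side with $s_{\alpha\beta}:=f_\alpha f_\beta$. The final sentence then follows from Proposition~\ref{p:CharacterizationCoprimePairs}(8) (or Examples~\ref{e:ExamplesCoprimesPairs}(4)): over a von Neumann regular ring every cyclic right ideal is a direct summand, so every right coprime pair is regular.

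For part~(3), assuming $R$ von Neumann regular, I would translate part~(1) into the language of left annihilators. The passage from cyclic right ideals to left annihilators is order-reversing and, over a regular ring, essentially a bijection on direct summands: $a R\subseteq a'R\iff \mathbf l_R(a')\subseteq\mathbf l_R(a)$ for regular $a,a'$, and an intersection $\bigcap_\alpha a_\alpha R$ corresponds to the sum $\sum_\alpha \mathbf l_R(a_\alpha)$ — provided we know $\bigl(\sum_\alpha\mathbf l_R(a_\alpha)\bigr)$ is the annihilator-closure governing membership in $\bigcap_\alpha a_\alpha R$. Concretely, for (a)$\Rightarrow$(b): given a lower bound, part~(1) produces $a,b$ with $a\in\bigcap_\alpha a_\alpha R$, $b\in\bigcap_\alpha b_\alpha R$, $aR+bR=R$; then $\mathbf l_R(a)\cap\mathbf l_R(b)=0$ by Proposition~\ref{p:CharacterizationCoprimePairs}, and $a\in a_\alpha R$ gives $\mathbf l_R(a_\alpha)\subseteq\mathbf l_R(a)$, hence $\sum_\alpha\mathbf l_R(a_\alpha)\subseteq\mathbf l_R(a)$, similarly for $b$. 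For (b)$\Rightarrow$(a): from $\sum_\alpha\mathbf l_R(a_\alpha)\subseteq\mathbf l_R(a)$ I need $aR\subseteq a_\alpha R$ for each $\alpha$; this is where regularity is essential — $a_\alpha R$ is a direct summand, so $aR\subseteq a_\alpha R\iff \mathbf l_R(a_\alpha)\subseteq\mathbf l_R(a)\iff \mathbf l_R(a_\alpha)$ kills $a$, which holds since $\mathbf l_R(a_\alpha)\subseteq\sum_\beta\mathbf l_R(a_\beta)\subseteq\mathbf l_R(a)$. Then $a\in\bigcap_\alpha a_\alpha R$, $b\in\bigcap_\alpha b_\alpha R$, and $\mathbf l_R(a)\cap\mathbf l_R(b)=0$ together with regularity gives, via Proposition~\ref{p:CharacterizationCoprimePairs}(8), that $\pair{a,b}$ is a right coprime pair below the whole chain, so part~(1) (or directly) yields the lower bound.

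The main obstacle is bookkeeping in part~(3): making the annihilator dictionary precise, i.e. verifying that for a regular element $a_\alpha$ one genuinely has $aR\subseteq a_\alpha R$ if and only if $\mathbf l_R(a_\alpha)$ annihilates $a$. This hinges on $a_\alpha R$ being a direct summand, say $R=a_\alpha R\oplus\mathbf r_R(g)$ for an idempotent $g$ with $a_\alpha R=gR$, $\mathbf l_R(a_\alpha)=R(1-g)$; then $a\in a_\alpha R=gR\iff (1-g)a=0\iff \mathbf l_R(a_\alpha)\subseteq\mathbf l_R(a)$ — a short computation once the idempotent is in hand, but the one spot where von Neumann regularity of $R$ is really used rather than just regularity of the $a_\alpha$. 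Everything else is a direct unwinding of the definitions and the already-established Propositions~\ref{p:CharacterizationCoprimePairs} and~\ref{p:MinimalCoprimePairs}.
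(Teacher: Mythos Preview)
Your proposal is correct and follows essentially the same route as the paper: part~(1) is identical; in part~(2) the paper sets $r_{\alpha\beta}=a_\beta$ directly (which equals your $e_\alpha e_\beta$), and in part~(3) the paper invokes the annihilator Galois connection $aR=\mathbf r_R\mathbf l_R(a)$ via a reference to Anderson--Fuller where you compute it by hand with an idempotent generator of $a_\alpha R$. One harmless slip to fix: in part~(2) you write $e_\alpha R\oplus f_\alpha R=R$, but a \emph{regular} coprime pair need not be minimal, so in general only $e_\alpha R+f_\alpha R=R$ holds---you never use the directness, so the argument is unaffected.
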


\begin{proof}
(1) If the chain has a lower bound, then trivially $\bigcap_{\alpha < \kappa}a_\alpha R+\bigcap_{\alpha < \kappa}b_\alpha R=R$. Conversely, write $1=a+b$ with $a \in \bigcap_{\alpha < \kappa}a_\alpha R$ and $b \in \bigcap_{\alpha < \kappa}b_\alpha R$. Then $\pair{a,b}$ is a right coprime pair and a lower bound of the system.

(2) Since $\pair{a_\alpha,b_\alpha}$ is regular, we may assume that $a_\alpha$ and $b_\alpha$ are idempotent elements for each $\alpha < \kappa$. Given $\alpha < \beta$, we have that $a_{\alpha} a_{\beta} = a_{\beta}$, since $a_{\beta} = a_\alpha r$ for some $r \in R$. Then the families of scalars $\{r_{\alpha\beta}\mid \alpha < \beta\}$ and $\{s_{\alpha\beta}\mid \alpha<\beta\}$, given by $r_{\alpha\beta} = a_{\beta}$ and $s_{\alpha\beta}=b_\beta$, for each pair $\alpha<\beta$, make the chain compatible. 

(3) Again, we may assume that $a_\alpha$ and $b_\alpha$ are idempotent elements for each $\alpha$.

(a) $\Rightarrow$ (b). Take $\pair{a,b}$ a lower bound of the chain. Then $\mathbf l_R(a) \cap \mathbf l_R(b) = 0$ by Proposition \ref{p:CharacterizationCoprimePairs}. Moreover, the inclusions
\begin{displaymath}
\bigcap_{\alpha < \kappa}a_\alpha R \geq aR \textrm{ and }\bigcap_{\alpha < \kappa}b_\alpha R \geq bR
\end{displaymath}
imply, by \cite[Propositions 2.15 and 2.16]{AndersonFuller}, that
\begin{displaymath}
\sum_{\alpha < \kappa}\mathbf l_R(a_\alpha R) \leq \mathbf l_R\left(\bigcap_{\alpha < \kappa} a_\alpha R\right) \leq \mathbf l_R(a) \textrm{ and } \sum_{\alpha < \kappa}\mathbf l_R(b_\alpha R) \leq \mathbf l_R\left(\bigcap_{\alpha < \kappa}b_\alpha R\right) \leq \mathbf l_R(b).
\end{displaymath}

(b) $\Rightarrow$ (a). Since $R$ is von Neumann regular, we may assume that $a$ and $b$ are idempotents. By Proposition \ref{p:CharacterizationCoprimePairs}, $\pair{a,b}$ is a right coprime pair. Moreover, since $aR = \mathbf r_R \mathbf l_R(a)$ and $a_\alpha R = \mathbf r_R \mathbf l_R (a_\alpha R)$ for each $\alpha < \kappa$, we get, by \cite[Propositions 2.15 and 2.16]{AndersonFuller} that
\begin{displaymath}
aR = \mathbf r_R \mathbf l_R(a) \leq \mathbf r_R \left(\sum_{\alpha < \kappa}\mathbf l_R(a_\alpha R)\right) = \bigcap_{\alpha < \kappa} \mathbf r_R\mathbf l_R(a_\alpha) = \bigcap_{\alpha < \kappa}a_\alpha R.
\end{displaymath}

And, similarly,
\begin{displaymath}
bR \leq \bigcap_{\alpha < \kappa}b_\alpha R.
\end{displaymath}
This means that $\pair{a,b}$ is a lower bound of the chain.
\end{proof}

Motivated by the preceding results, we introduce the following natural notion of strongly exchange ring:

\begin{definition}
We say that a ring $R$ is right strongly exchange if every compatible descending chain of right coprime pairs has a minimal lower bound.
\end{definition}

\begin{examples}
\begin{enumerate}
\item Let $R$ be an integral domain. Then, for every pair of elements $a$ and $b$ such that $a \in bR$, there exists a unique $r \in R$ such that $a=br$. As a consequence, every descending chain of right coprime pairs is compatible.

\item $\mathbb Z$ is not a strongly exchange ring since, for instance, if $p$ and $q$ are different primes, the compatible descending chain $\{\pair{p^n,q^n}\mid n < \omega\}$ does not have a lower bound.
\end{enumerate}
\end{examples}

We show an example of an exchange ring which is not right strongly exchange. Actually, we are going to construct, for any regular cardinal $\kappa$, a von Neumann regular ring $S$ such that all compatible descending chains of right coprime pairs of cardinality smaller than $\kappa$ have minimal lower bounds, but there does exist a chain of length $\kappa$ with no lower bound.

Recall that a cardinal $\kappa$ is called \textit{singular} if there exists a cardinal $\mu < \kappa$ and a family of cardinals $\{\kappa_\alpha \mid \alpha < \mu\}$ with $\kappa_\alpha < \kappa$ for each $\alpha$, such that $\kappa = \sum_{\alpha < \mu}\kappa_\alpha$. A cardinal is called \textit{regular} when it is not singular.

\begin{example}\label{e:RegularRingNotDescendingChain}
Let $\kappa$ be an infinite regular cardinal. There exists a von Neumann regular ring $S$ satisfying that:
\begin{enumerate}
\item Every compatible descending chain  of right coprime pairs of length smaller than $\kappa$ has a minimal lower bound.

\item There exists a compatible descending chain of right coprime pairs with no lower bound.
\end{enumerate}
Since every von Neumann regular ring is an exchange ring, this example shows that exchange rings do not need to be strongly exchange.
\end{example}

\begin{proof}
Let $F$ a field and $S$, the subring of $F^{\kappa}$ given by
\begin{displaymath}
\{x \in F^{\kappa}\mid \exists C \subseteq \kappa \textrm{ with }|C|<\kappa \textrm{ and }x(\alpha) = x(\beta) \, \forall \alpha,\beta \in \kappa\setminus C\}.
\end{displaymath}

It is clear that $S$ is von Neumann regular, since for any $x \in S$, $x$ is of the form $xyx$, where, for each $\alpha$, $y(\alpha) = x(\alpha)^{-1}$ if $x(\alpha)\neq 0$, and $y(\alpha)=0$ otherwise.

(1) First note that $x \in S$ is idempotent if and only if $x(\alpha) \in \{0,1\}$ for each $\alpha < \kappa$ and one of the sets, $\{\alpha < \kappa \mid x(\alpha)=1\}$ or $\{\alpha < \kappa \mid x(\alpha)=0\}$, has cardinality smaller than $\kappa$. Indeed, if $\Gamma$ is any subset of $\kappa$ and we denote by $e_{\Gamma}$ the element of $F^\kappa$ satisfying that
\begin{displaymath}
e_{\Gamma}(\alpha) = \left\{\begin{array}{ll}1 & \textrm{if $\alpha \in \Gamma$}\cr 0 & \textrm{if $\alpha \notin \Gamma$}\cr
\end{array}\right.
\end{displaymath}
then the set of idempotents of $S$ is $\{e_\Gamma \mid \Gamma \subseteq \kappa \textrm{ and } |\Gamma| < \kappa \textrm{ or } |\kappa\setminus\Gamma| < \kappa\}$.

Let us choose an ordinal $\lambda < \kappa$ and a descending chain of right coprime pairs $\{\pair{a_\alpha,b_\alpha}\mid \alpha < \lambda\}$. We may assume that both $a_\alpha$ and $b_\alpha$ are idempotents. Since $\pair{a_\alpha,b_\alpha}$ is a right coprime pair, 
\begin{equation}\label{e:UnionSupports}
\supp(a_\alpha) \cup \supp(b_\alpha)= \kappa,
\end{equation}
for each $\alpha < \lambda$.

Let us denote by $\Gamma = \bigcap_{\alpha < \kappa}\supp(a_\alpha)$ and $\Delta = \bigcap_{\alpha < \kappa} \supp(b_\alpha)$. We claim that
\begin{enumerate}
\item[(A)] $\Gamma \cup \Delta = \kappa$.

\item[(B)] Either $\Gamma$ or $\kappa\setminus\Gamma$ has cardinality smaller than $\kappa$. And, similarly, either $\Delta$ or $\kappa\setminus\Delta$ has cardinality smaller than $\kappa$.
\end{enumerate}
Assume that this claim is already proved and choose the idempotents $e_\Gamma$ and $e_\Delta$. They belong to $S$ by (B). And $\pair{e_\Gamma,e_\Delta}$ is a right coprime pair by (A), which is clearly a lower bound of the initial chain. Therefore, $\pair{e_\Gamma,e_{\Delta\setminus\Gamma}}$ is a minimal lower bound of the chain.

Let us prove our claim. In order to prove (A), take $x \in \kappa$. Then $x \in \supp(a_0)$ or $x \in \supp(b_0)$ by equation (1). Suppose $x \in \supp(a_0)$. If $x \notin \supp(b_0)$, then $x \in \Gamma$ by (1) and we are done. If $x \in \supp(b_0)$, we have two possibilities. If $x \in \supp(a_\alpha) \cap \supp(b_\alpha)$ for each $\alpha$, then $x \in \Gamma \cap \Delta$ and we are done. Otherwise, take $\alpha$ the minimum ordinal such that $x \notin \supp(a_\alpha) \cap \supp(b_\alpha)$ and assume, by (1), that $x \in \supp(a_\alpha)$. Then, again by (1), $x \in \Gamma$. This proves claim (A).

In order to prove (B), let us  we check that either $\Gamma$ or $\kappa\setminus\Gamma$ has cardinality smaller than $\kappa$. The proof involving $\Delta$ is similar. If there exists an $\alpha < \kappa$ such that $|\supp(a_\alpha)|<\kappa$, then $\Gamma \subseteq \supp(a_\alpha)$ has cardinality smaller than $\kappa$. If $|\supp(a_\alpha)|=\kappa$ for each $\alpha < \lambda$, then $|\kappa\setminus\supp(a_\alpha)|<\kappa$. Therefore,
\begin{displaymath}
|\kappa \setminus \Gamma| \leq \sum_{\alpha < \lambda}|\kappa \setminus \supp(a_\alpha)|<\kappa
\end{displaymath}
since $\kappa$ is regular and $\lambda < \kappa$.

(2) Let $I_1$ and $I_2$ be two subsets of $\kappa$ such that $|I_1|=|I_2|=\kappa$, $I_1 \cap I_2 = \emptyset$ and $\kappa=I_1 \cup I_2$. Consider in $I_1$ and $I_2$ the orders induced by $\kappa$ and set, for each $\alpha <\kappa$,  $\Gamma_\alpha = \kappa\setminus\{\beta\in I_1\mid \beta < \alpha\}$ and $\Delta_\alpha=\kappa\setminus\{\gamma\in I_2 \mid \gamma < \alpha\}$. We obtain a descending chain of right coprime pairs $\{\pair{e_{\Gamma_\alpha},e_{\Delta_\alpha}}\mid \alpha < \kappa\}$. Let us check that this chain does not have a lower bound. Suppose that $\pair{a,b}$ is a lower bound. Then

\begin{enumerate}[label=(\roman*)]
\item $\supp(a) \subseteq \bigcap_{\alpha < \kappa}\supp(e_{\Gamma_\alpha})$,

\item $\supp(b) \subseteq \bigcap_{\alpha < \kappa}\supp(e_{\Delta_\alpha})$ and

\item $\supp(a) \cup \supp(b) = \kappa$.
\end{enumerate}

Since $I_1 \cap \left( \bigcap_{\alpha < \kappa}\supp(e_{\Gamma_\alpha})\right)=\emptyset$, (i) implies that $|\{\alpha < \kappa \mid a(\alpha)=0\}|=\kappa$. Since $I_2 \cap \left(\bigcap_{\alpha < \kappa}\supp(e_{\Delta_\alpha})\right)=\emptyset$, (ii) implies that $I_2 \cap \supp(b)=\emptyset$, so that $I_2 \subseteq \supp(a)$ by (iii). Then, $|\supp(a)|=\kappa$.

We have proved that $|\supp(a)|=|\{\alpha < \kappa\mid a(\alpha)=0\}|=\kappa$, which contradicts that $a \in R$.
\end{proof}


Now we show, using ideas from \cite[Lemma 2]{GuilHerzog}, that left cotorsion rings are right strongly exchange. Recall that $R$ is left cotorsion if $\Ext_R^1(F,R)=0$ for each flat left $R$-module $F$ (see e.g. \cite{GuilHerzog}). Let us begin by discussing the relationship between compatible descending chains of right coprime pairs and certain direct limits.

Let $\{\pair{a_\alpha,b_\alpha} \mid 
\alpha < \kappa\}$ be a compatible descending chain of right coprime pairs, and choose families of scalars $\{r_{\alpha\beta}\mid \alpha < \beta\}$ and $\{s_{\alpha\beta} \mid \alpha<\beta\}$ witnessing the compatibility of the chain. In a similar way as in \cite[Lemma 2]{GuilHerzog}, we can construct a direct system of split exact sequences $(\Sigma_\alpha,(f_{\alpha\beta},g_{\alpha\beta},h_{\alpha\beta}) \mid \alpha < \beta)$ in $\Modl R$, associated to this descending chain and the families of scalars, as follows. For each $\alpha$, $\Sigma_\alpha$ is the short exact sequence
\begin{displaymath}
\begin{tikzcd}
0 \arrow{r} & R \arrow{r}{k_{\alpha}} & R \oplus R \arrow{r}{p_\alpha} & Z_\alpha \arrow{r} & 0,
\end{tikzcd}
\end{displaymath}
where $(x)k_\alpha = (xa_\alpha,xb_\alpha)$ for each $x \in R$, and $p_\alpha$ is a cokernel of $k_\alpha$. The fact that $\pair{a_\alpha,b_\alpha}$ is a right coprime pair is equivalent to $\Sigma_\alpha$ being split. 

The morphisms $f_{\alpha\beta}$, $g_{\alpha\beta}$ and $h_{\alpha\beta}$ are defined in the following way:
\begin{itemize}
\item $f_{\alpha\beta}$ is the identity;

\item $(x,y)g_{\alpha\beta} = (xr_{\alpha\beta},ys_{\alpha\beta})$;

\item $h_{\alpha\beta}$ is the unique morphism from $Z_\alpha$ to $Z_\beta$ induced by $g_{\alpha\beta}$.
\end{itemize}

Note that, as the tensor product commutes with direct limits, the direct limit of the system of sequences is of the form
\begin{displaymath}
\begin{tikzcd}
0 \arrow{r} & R \arrow{r}{k} & F_1 \oplus F_2 \arrow{r}{p} & Z \arrow{r} & 0
\end{tikzcd}
\end{displaymath}
with $F_1$ and $F_2$ flat modules.

\begin{theorem}\label{t:CotorsionAreStrongly}
Let $R$ be a left cotorsion ring. Then $R$ is right strongly exchange.
\end{theorem}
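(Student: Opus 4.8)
The plan is to run the direct-limit construction just introduced, use the cotorsion hypothesis to split the limit sequence, read off a common lower bound of the chain from a splitting, and then descend to a minimal lower bound.

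First I would fix a compatible descending chain $\{\pair{a_\alpha,b_\alpha}\mid\alpha<\kappa\}$ together with witnessing scalars $\{r_{\alpha\beta}\}$, $\{s_{\alpha\beta}\}$, and form the associated direct system of split short exact sequences $(\Sigma_\alpha)$ described above. Its direct limit is an exact sequence
\[
0\longrightarrow R\;\mapr{k}\;F_1\oplus F_2\;\mapr{p}\;Z\longrightarrow 0,
\]
with $F_1=\varinjlim(R,r_{\alpha\beta})$ and $F_2=\varinjlim(R,s_{\alpha\beta})$ flat and $Z=\varinjlim Z_\alpha$. Since each $\Sigma_\alpha$ is split, $Z_\alpha$ is a direct summand of $R\oplus R$, hence projective, so $Z$ is a direct limit of projective modules and therefore flat. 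As $R$ is left cotorsion, $\Ext^1_R(Z,R)=0$, so the displayed sequence splits; fix a retraction $q=(q_1,q_2)\colon F_1\oplus F_2\to R$ with $kq=\mathrm{id}_R$.

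Next I would extract the lower bound. Writing $j^i_\alpha\colon R\to F_i$ for the canonical maps, set $c_\alpha=(1)(j^1_\alpha q_1)$ and $d_\alpha=(1)(j^2_\alpha q_2)$ in $R$. The compatibility relations of the colimit cones give $c_\alpha=r_{\alpha\beta}c_\beta$ and $d_\alpha=s_{\alpha\beta}d_\beta$ for $\alpha<\beta$, while evaluating $kq=\mathrm{id}_R$ at $1$ (using $(1)k=((a_\alpha)j^1_\alpha,(b_\alpha)j^2_\alpha)$ for every $\alpha$) yields $a_\alpha c_\alpha+b_\alpha d_\alpha=1$ for all $\alpha<\kappa$. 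From $a_\beta=a_\alpha r_{\alpha\beta}$ and $c_\alpha=r_{\alpha\beta}c_\beta$ one gets $a_\alpha c_\alpha=a_\beta c_\beta$, and likewise $b_\alpha d_\alpha=b_\beta d_\beta$, so $a:=a_\alpha c_\alpha$ and $b:=b_\alpha d_\alpha$ are independent of $\alpha$. Then $\pair{a,b}$ is a right coprime pair with $aR\subseteq a_\alpha R$ and $bR\subseteq b_\alpha R$ for all $\alpha$, i.e.\ a common lower bound of the chain; moreover, adjoining $\pair{a,b}$ below the chain with new transition scalars $c_\alpha$ and $d_\alpha$ produces again a compatible descending chain.

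It remains to replace $\pair{a,b}$ by a minimal lower bound. If $\pair{a,b}$ is not minimal, pick a right coprime pair strictly below it, write its generators in the form $(at,bu)$, and extend the compatibility data once more; iterating this transfinitely — taking, at limit stages, the common lower bound produced by the previous two paragraphs applied to the compatible chain built so far — yields a strictly descending sequence of lower bounds of the original chain inside the poset of pairs of cyclic right ideals of $R$. As that poset is a set, the descent must terminate, and it can only terminate at a minimal lower bound; hence $R$ is right strongly exchange. (Alternatively, since left cotorsion rings are semiregular by \cite{GuilHerzog}, hence exchange, Proposition~\ref{p:CharacterizationRingsCoprimePairs}(5) directly produces a minimal right coprime pair below $\pair{a,b}$.) The one non-formal step is the splitting via $\Ext^1_R(Z,R)=0$, which rests on observing that $Z$ is flat; everything else — exactness of direct limits, the colimit identities for $c_\alpha,d_\alpha$, and termination of the descent — is routine, and the main care needed is keeping the compatibility bookkeeping straight throughout.
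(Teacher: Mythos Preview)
Your proof is correct and follows essentially the same strategy as the paper: form the direct limit of the split sequences $\Sigma_\alpha$, use the cotorsion hypothesis to split the limit (via flatness of $Z$), read off a compatible lower bound from a retraction, and then run a transfinite descent that must terminate by cardinality of $\RCP(R)$. The paper packages this as a proof by contradiction (assume no minimal lower bound and build a strictly descending chain indexed by all ordinals), while you phrase it as a direct construction, but the mechanism is identical; your parenthetical shortcut through semiregularity of left cotorsion rings and Proposition~\ref{p:CharacterizationRingsCoprimePairs}(5) is a legitimate alternative to the transfinite descent, though it imports the main result of \cite{GuilHerzog} rather than recovering it.
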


\begin{proof}
Suppose that the result is not true, that is, that $R$ is not right strongly exchange. Then there exists a compatible descending chain of right coprime pairs, $\{\pair{a_\alpha,b_\alpha}\mid \alpha < \kappa\}$, which does not have a minimal lower bound.

For each ordinal $\alpha$, we are going to construct a right coprime pair $\pair{x_\alpha,y_\alpha}$ such that $\pair{x_\alpha,y_\alpha} < \pair{x_\gamma,y_\gamma}$ for each $\gamma < \alpha$ and the chain $\{\pair{x_\gamma,y_\gamma}\mid \gamma \leq \alpha\}$ is compatible. This is a contradiction, since it implies that the cardinality of $\RCP(R)$ is bigger than the cardinality of $\alpha$ for any ordinal $\alpha$.

We will make our construction by transfinite induction on all ordinals $\alpha$. If $\alpha < \kappa$, set $x_\alpha = a_\alpha$ and $y_\alpha = b_\alpha$. Let us choose now an ordinal $\alpha \geq \kappa$ and assume that we have constructed right coprime pairs $\pair{x_\gamma,y_\gamma}$ for each $\gamma < \alpha$ satisfying the above condition. Let us distinguish among two possibilities.

Suppose first that $\alpha$ is a successor ordinal, say $\alpha = \beta+1$. As $\pair{x_\beta,y_\beta}$ is not minimal, by the election of the initial chain, there exists a right coprime pair $\pair{x_{\alpha},y_{\alpha}} \in \RCP(R)$ strictly smaller than $\pair{x_\beta,y_\beta}$. Clearly, the chain $\{\pair{x_\gamma,y_\gamma} \mid \gamma \leq \alpha\}$ is compatible.

Suppose now that $\alpha$ is a limit ordinal. Choose families of scalars, $\{r_{\alpha\beta}\mid \alpha < \beta\}$ and $\{s_{\alpha\beta} \mid \alpha<\beta\}$, witnessing the compatibility of the chain, and consider the direct system of splitting short exact sequences, $(\Sigma_\gamma,(f_{\gamma \beta},g_{\gamma \beta},h_{\gamma \beta}) \mid \gamma < \beta < \alpha)$, associated to this descending chain. As shown before, the direct limit of the system is of the form
\begin{equation}\label{e:Sequence}
\begin{tikzcd}
0 \arrow{r} & R \arrow{r}{k} & F_1 \oplus F_2 \arrow{r}{p} & Z \arrow{r} & 0
\end{tikzcd}
\end{equation}
with $F_1$ and $F_2$ flat modules. This sequence is pure, since all the sequences in the system are split (see e.g. \cite[34.5]{Wisbauer}). This implies that $Z$ is flat by \cite[36.6]{Wisbauer}. We deduce now that this sequence is split, since $R$ is left cotorsion.

Take a splitting $k'$ of $k$ and denote by $f_\gamma:R \oplus R \rightarrow F_1 \oplus F_2$ the canonical map associated to the direct limit, for each $\gamma < \alpha$. Write $(1)k = z_1+z_2$ for some $z_1 \in F_1$ and $z_2 \in F_2$ and note that $(x_\gamma,0)f_\gamma = z_1$ and $(0,y_\gamma)f_\gamma = z_2$ for each $\gamma < \alpha$. Denote by $x_\alpha = (z_1)k'$, $y_\alpha = (z_2)k'$, $r_{\gamma} = (1,0)f_\gamma k'$ and $s_{\gamma} = (0,1)f_\gamma k'$ for each $\gamma < \alpha$. Then $\pair{x_\alpha,y_\alpha}$ is a right coprime pair since
\begin{displaymath}
1 = (1)kk' = (z_1+z_2)k' = x_\alpha+y_\alpha.
\end{displaymath}
Moreover, for each $\gamma < \alpha$,
\begin{displaymath}
x_\gamma r_{\gamma} = (x_\gamma,0)f_\gamma k' = x_\alpha \textrm{ and }y_\gamma s_{\gamma} = (0,y_\gamma)f_\gamma k' = y_\alpha.
\end{displaymath}
Finally, if $\gamma<\beta$,
\begin{displaymath}
r_{\gamma\beta}r_{\beta} = (1,0)f_{\gamma\beta}f_\beta k' = (1,0)f_\gamma k' = r_{\gamma}
\end{displaymath}
and
\begin{displaymath}
s_{\gamma\beta}r_{\beta} = (0,1)f_{\gamma\beta}f_\beta k' = (0,1)f_\gamma k' = s_{\gamma}.
\end{displaymath}

These identities mean that $\pair{x_\alpha,y_\alpha}$ is lower bound of the chain and that $\{\pair{x_\gamma,y_\gamma}\mid \gamma \leq \alpha\}$ is a compatible descending chain of right coprime pairs. Moreover, $\pair{x_\alpha, y_\alpha} < \pair{x_\gamma,y_\gamma}$ for each $\gamma < \alpha$, since the equality $\pair{x_\alpha,y_\alpha}=\pair{x_\gamma,y_\gamma}$ for some $\gamma < \alpha$ implies that $\pair{x_\gamma,y_\gamma}=\pair{x_{\gamma+1},y_{\gamma+1}}$, which is a contradiction. This finishes the construction.
\end{proof}

Now we prove that local rings are right strongly exchange.

\begin{proposition}\label{p:LocalAreStrongly}
Every local ring is right (and left) strongly exchange.
\end{proposition}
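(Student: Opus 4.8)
The plan is to reduce everything to the characterization of local rings obtained in Proposition~\ref{p:CharacterizationRingsCoprimePairs}(2): in a local ring every right coprime pair is trivial. Concretely, for each member $\pair{a_\alpha,b_\alpha}$ of a chain, at least one of $a_\alpha R$ and $b_\alpha R$ must equal $R$, because a local ring has a unique maximal right ideal $J(R)$ containing every proper right ideal, so if both $a_\alpha R$ and $b_\alpha R$ were proper we would get $a_\alpha R+b_\alpha R\subseteq J(R)\subsetneq R$. On the minimal side, the only idempotents of a local ring are $0$ and $1$, so by Proposition~\ref{p:MinimalCoprimePairs} the only minimal right coprime pairs are $\pair{1,0}$ and $\pair{0,1}$; moreover $\pair{1,0}\leq\pair{a_\alpha,b_\alpha}$ holds precisely when $a_\alpha R=R$, and $\pair{0,1}\leq\pair{a_\alpha,b_\alpha}$ precisely when $b_\alpha R=R$. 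Thus the statement amounts to showing that along a descending chain the coordinate that carries a unit cannot change.

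The key step is then a short argument by contradiction, which does not even use compatibility. First I would fix a descending chain $\{\pair{a_\alpha,b_\alpha}\mid\alpha<\kappa\}$ of right coprime pairs and suppose that neither $\pair{1,0}$ nor $\pair{0,1}$ is a lower bound of it. Then there exist indices $\alpha_1$ with $a_{\alpha_1}R\neq R$ and $\alpha_2$ with $b_{\alpha_2}R\neq R$, and since the chain is indexed by an ordinal these indices are comparable. In the case $\alpha_1<\alpha_2$ the descent gives $a_{\alpha_2}R\subseteq a_{\alpha_1}R\subseteq J(R)$ and also $b_{\alpha_2}R\subseteq J(R)$, so $a_{\alpha_2}R+b_{\alpha_2}R\neq R$, contradicting that $\pair{a_{\alpha_2},b_{\alpha_2}}$ is a right coprime pair; the case $\alpha_2<\alpha_1$ is symmetric, and $\alpha_1=\alpha_2$ is immediate. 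Hence either $a_\alpha R=R$ for every $\alpha$, so $\pair{1,0}$ is a minimal lower bound of the chain, or $b_\alpha R=R$ for every $\alpha$, so $\pair{0,1}$ is. This proves $R$ is right strongly exchange, and since being local is a left--right symmetric property, running the dual argument gives that $R$ is left strongly exchange as well.

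There is no real obstacle in this proof; the whole point is the elementary observation that a descending chain of coprime pairs in a local ring cannot ``flip'' the unit from one coordinate to the other. The only place that requires a moment's care is noticing that the index set being an ordinal forces any two members of the chain to be comparable, which is exactly what excludes the bad configuration where one coordinate becomes non-invertible at $\alpha_1$ and the other at an index not comparable with it.
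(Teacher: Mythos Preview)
Your proof is correct and follows essentially the same approach as the paper: both arguments exploit that in a local ring every right coprime pair is trivial, and both derive a contradiction by taking the larger of two indices $\alpha,\beta$ at which opposite coordinates fail to be units, forcing both coordinates at $\gamma=\max\{\alpha,\beta\}$ into $J(R)$. The only cosmetic difference is that the paper first disposes of the degenerate case where every $\pair{a_\alpha,b_\alpha}=\pair{1,1}$ before fixing a single index where the pair is nontrivial, whereas you fold this into the contradiction directly; your observation that compatibility is never invoked is also true of the paper's argument.
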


\begin{proof}
Take a compatible descending chain of right coprime pairs, $\{\pair{a_\alpha,b_\alpha}\mid \alpha<\kappa\}$. If $\pair{a_\alpha,b_\alpha}=\pair{1,1}$ for each $\alpha<\kappa$, then $\pair{1,0}$ is trivially a minimal lower bound of the chain.

So we may assume that there exists an $\alpha < \kappa$ such that $\pair{a_\alpha,b_\alpha} \neq \pair{1,1}$. Since $\pair{a_\alpha,b_\alpha}$ is a trivial coprime pair  by Proposition \ref{p:CharacterizationRingsCoprimePairs}, either $a_\alpha$ or $b_\alpha$ is a unit. Suppose, without loss of generality, that $a_\alpha$ is a unit. Then $b_\alpha \in J(R)$. We claim that $a_\beta$ is a unit for each $\beta < \kappa$. Suppose, on the contrary, that there exists a $\beta<\kappa$ such that $a_\beta$ is not a unit. Then $a_\beta \in J(R)$. Choosing $\gamma={\rm max\{\alpha,\beta\}}$, we deduce that $a_\gamma\in J(R)$ (since $a_\gamma\in a_\beta R$) and $b_\gamma \in J(R)$ (since $b_\gamma\in b_\alpha R$). But then, $\pair{a_\gamma,b_\gamma}$ cannot be a right coprime pair. A contradiction that proves our claim.

Consequently, $a_\alpha$ is a unit for each $\alpha<\kappa$ and thus, $\pair{1,0}$ is a minimal lower bound of the chain.
\end{proof}

Using this result we can show that the class of left cotorsion rings is strictly contained in the class of right strongly exchange rings.

\begin{example}
Let $R$ be a local Noetherian commutative ring which is not complete in the $I$-adic topology. As the completion in the $I$-adic topology of $R$ is the pure-injective envelope of $R$ (see e.g. \cite[Example 7.7]{JensenLenzing}), this means that $R$ cannot be cotorsion by \cite[Lemma 3.1.6]{Xu}. However, $R$ is strongly exchange by Proposition \ref{p:LocalAreStrongly}.
\end{example}

We close this section by showing that the endomorphism ring of a continuous left $R$-module is right strongly exchange. Recall that a left $R$-module $M$ is called continuous when it satisfies the following two conditions (see e.g. \cite{MohamedMuller, NicholsonYousif}):
\begin{enumerate}
\item[(C1)] Every submodule of $M$ is essential in a direct summand.

\item[(C2)] Any submodule of $M$ which is isomorphic to a direct summand of $M$ is itself a direct summand.
\end{enumerate}

For the rest of this section, let us fix a left $R$-module $M$ and an injective envelope $u:M \rightarrow E$ of $M$. We will denote by $T$ and $S$ the endomorphism rings of $M$ and $E$, respectively. For every $f \in T$, we can use the injectivity of $E$ to find an extension $\hat f$ of $f$ to $E$, that is, a morphism $\hat f:E \rightarrow E$ such that $u\hat f=fu$.

Let $f,g \in T$ and take two extensions, $\hat f$ and $\hat g$, of these morphisms to $E$. In general, it may happen that $\hat f S \leq \hat g S$ but $fT \nleq gT$. Our next lemma shows that this is not the case when $M$ is continuous and $f$ is idempotent.

\begin{lemma}\label{l:CyclicContinuous}
Let $M$ be a continuous left $R$-module and $f,e\in T$ with $e$ idempotent. Take $\hat f$ and $\hat e$ extensions of $f$ and $e$ to $E$, respectively. If $\hat e S \leq \hat f S$, then $eT \leq fT$.
\end{lemma}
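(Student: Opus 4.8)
The plan is to work inside the injective envelope $E$ and exploit the module-theoretic structure that continuity imposes on $M$. Since $e$ is idempotent in $T=\End({}_RM)$, the module $M$ decomposes as $M = eM \oplus (1-e)M$. The extension $\hat e$ of $e$ to $E$ need not be idempotent, but I would first replace it by a genuine idempotent: the injective envelope $E$ splits as $E = E_1 \oplus E_2$ with $E_1$ an injective envelope of $eM$ and $E_2$ an injective envelope of $(1-e)M$, and the projection $\pi$ of $E$ onto $E_1$ along $E_2$ is an idempotent in $S$ extending $e$. Since any two extensions of $e$ differ by a map that kills $M$ (hence kills an essential submodule of $E$, hence, being a map into the injective — actually into $E$ — from $E$, vanishing on an essential submodule does force it to be... no), I should be careful: two extensions of $e$ to $E$ need not be equal. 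However, $\hat e S$ and $\pi S$ may differ. The cleaner route: note $\hat e S \leq \hat f S$ is a statement about cyclic right ideals of $S$, and I want to transfer it to $T$. So I would instead argue directly.

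Here is the approach I would actually carry out. Write $\hat e = \hat f s$ for some $s \in S$ (this is what $\hat eS \le \hat fS$ gives). Restricting along $u\colon M \to E$ and using $u\hat e = eu$ and $u\hat f = fu$, we get $eu = u\hat f s = fu s'$... but $us'$ need not land in $u(M)$, so this does not immediately give a factorization in $T$. The key extra input is continuity of $M$. I would proceed as follows: since $\hat e$ extends an idempotent $e$, the image $\hat eE$ contains $eM$ as an essential submodule (because $eM \le \hat e E$ and $eM$ is essential in $E_1 \cong$ injective envelope of $eM$, and one checks $\hat e E$ sits between $eM$ and $E_1$ up to the relevant identifications — more precisely $\hat e E$ is injective as a direct summand? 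No, $\hat e$ need not be idempotent). Let me restructure: replace $\hat e$ by the idempotent extension $\pi$ described above; then $\hat eS = \pi S$? This holds because both $\hat e$ and $\pi$ extend $e$, so $(\hat e - \pi)$ vanishes on $M$; since $M$ is essential in $E$ and $E$ is injective... a map $E \to E$ vanishing on an essential submodule need not be zero, but $\hat eS = \pi S$ still may fail. I think the correct and intended path is to use that the problem only needs $\hat eS \le \hat fS$ for \emph{some} choice of extensions, and the statement is about \emph{all} choices, so I may assume $\hat e = \pi$ is idempotent from the start by choosing that extension — wait, the hypothesis fixes $\hat e$.

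So the honest plan: from $\hat e S \le \hat f S$, I get $\hat e = \hat f \hat g$ for some $\hat g \in S$. Now consider the composite $M \xrightarrow{u} E \xrightarrow{\hat g} E$; compose with the projection of $E$ onto... Here continuity enters through condition (C2): the submodule $M \cap \hat g^{-1}(\hat f^{-1}(\cdots))$... I think the right manipulation is: $e M = \Img(e) = u^{-1}(\hat e E \cap uM)$ roughly, and $\hat e E = \hat f \hat g E \subseteq \hat f E$; intersecting with $uM$ and pulling back, $eM \subseteq u^{-1}(\hat f E \cap uM)$. Now $\hat f E \cap uM \supseteq fM = u^{-1}$ of it; but there may be more. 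This is where (C1) is used: $fM$ need not be a direct summand, so let $fM$ be essential in a direct summand $D$ of $M$; one shows $\hat f E \cap uM = D$ (or is essential in it), and then $eM \subseteq D$. Finally one needs $eM \subseteq fM$, not just $eM \subseteq D$; here (C2) is invoked: $eM$ is a direct summand isomorphic to one (namely itself), and $eM \subseteq D$ with $D/fM$ ... hmm, one shows $eM \cap$ (something) splits. I expect the main obstacle to be precisely this last step — upgrading $eM \subseteq D$ (where $D$ is the (C1)-closure of $fM$) to the genuine containment $eM \subseteq fM$, which must use (C2) together with the fact that $e$ is idempotent so $eM$ is a direct summand, and probably the observation that $\hat e E$ being the injective hull of $eM$ forces $eM = \hat e E \cap uM$ exactly, not merely essentially. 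I would spend the bulk of the write-up making that intersection computation precise and then applying (C2).
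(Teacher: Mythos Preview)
Your proposal has a genuine gap, and the route you eventually settle on heads in the wrong direction.

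First, a convention issue: in this paper left-module morphisms compose left to right, so $\hat e=\hat f h$ means \emph{apply $\hat f$ first, then $h$}. Hence $\Img\hat e=(\Img\hat f)h\subseteq\Img h$, not $\Img\hat f$; your claimed inclusion $\hat e E\subseteq\hat f E$ is simply false.

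More seriously, the target $eT\le fT$ means $e=ft$ for some $t\in T$, i.e.\ $e$ factors as $M\xrightarrow{f}M\xrightarrow{t}M$. This has nothing to do with $\Img e\subseteq\Img f$. Over a field, with $M=k^2$, $e\colon(a,b)\mapsto(a,0)$ and $f\colon(a,b)\mapsto(0,a)$, one has $e=fh$ for $h\colon(a,b)\mapsto(b,0)$, yet $\Img e\cap\Img f=0$. So even if your (C1)/(C2) manoeuvre produced $\Img e\subseteq\Img f$, you would not be done --- and in general that inclusion fails.

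The correct thread is the one you touched and then dropped: the obstruction is that $h$ need not carry $M$ into $M$. The paper shows that it \emph{does} on the piece that matters. From $\hat e=\hat f h$ one gets $(m)efh=(m)e$ for every $m\in M$, so $h$ sends $\Img(ef)$ back into $\Img e\subseteq M$, and the same identity shows $f|_{\Img e}$ is injective. Now (C2) makes $\Img(ef)\cong\Img e$ a direct summand of $M$; (C1) places $\Img((1-e)f)$ essentially inside a direct summand $K$; a short computation with $h$ gives $\Img(ef)\cap K=0$; and (C3) yields $M=\Img(ef)\oplus K\oplus L$. Setting $h'\in T$ equal to $h$ on $\Img(ef)$ and zero on $K\oplus L$ gives $fh'=e$. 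The decomposition that drives the proof involves $\Img(ef)$ and $\Img((1-e)f)$, not $\Img e$ and $\Img f$.
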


\begin{proof}
There exists, by hypothesis, an $h \in S$ such that $\hat e=\hat fh$.  The restriction of $f$ to $\Img e$ is a monomorphism from $\Img e$ to $M$ with image $\Img ef$, since if $x \in M$ satisfies $(x)ef=0$, then $0 = (x)\hat e \hat f h=(x)\hat e \hat e = (x)ee = (x)e$. We can apply now (C2) to conclude that $\Img ef$ is a direct summand of $M$.

On the other hand, by condition (C1), we can find a direct summand $K$ of $M$ such that $\Img (1-e)f$ is essential in $K$. We claim that $\Img ef \cap K = 0$. Suppose, on the contrary, that there exists $0\neq y \in  \Img ef \cap K$. Write $y=(x)ef$ for some $x \in M$, and note that, as $\Img(1-e)f$ is essential in $K$, there exist $z \in M$ and $r \in R$ such that $0\neq (rx)ef=(z)(1-e)f$. Applying $h$ to this identity we get, on one hand, that $(rx)efh=(rx)e \neq 0$ and, on the other, that $(z)(1-e)fh=(z)(1-e)e=0$, a contradiction that proves our claim.

We can now apply (C3) to conclude that $\Img ef\oplus K$ is a direct summand of $M$. Thus, there exists a submodule $L$ of $M$ such that $\Img ef \oplus K \oplus L = M$. Consider the endomorphism $h'$ of $M$ whose restriction to $\Img ef$ is equal to $h$ (note that, $\Img efh \leq M$), and its restriction to $K\oplus L$ is zero. Then, for each $x \in M$,
\begin{displaymath}
(x)fh'=((x)e+(x)(1-e))fh'=(x)efh'+(x)(1-e)fh'=(x)e.
\end{displaymath}
This means that $fh'=e$ and that $eT \leq fT$.
\end{proof}

The following lemma allows us to construct right coprime pairs in $S$ from right coprime pairs in $T$.

\begin{lemma}\label{l:ContorsionPairEndomorphismRingInjectiveHull}
Let $\pair{f,g}$ be a right coprime pair in $T$ and $\hat f$ and $\hat g$ extensions of $f$ and $g$ to $E$, respectively. Then $\pair{\hat f,\hat g}$ is a right coprime pair in $S$.
\end{lemma}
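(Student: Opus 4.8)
The plan is to convert a Bezout identity for $\pair{f,g}$ in $T$ into one for $\pair{\hat f,\hat g}$ in $S$, with the extra factor that appears turning out to be a unit of $S$ thanks to injectivity of $E$ and essentiality of $u(M)$. Concretely, since $\pair{f,g}$ is a right coprime pair in $T$, I would choose $h_1,h_2\in T$ with $fh_1+gh_2=1_M$, fix extensions $\hat h_1,\hat h_2\in S$ of $h_1,h_2$ to $E$ (so $u\hat h_i=h_iu$), and set $w=\hat f\hat h_1+\hat g\hat h_2\in S$. A direct computation using $u\hat f=fu$, $u\hat g=gu$, $u\hat h_i=h_iu$ and distributivity of composition gives $uw=(fh_1+gh_2)u=u$; that is, $w$ restricts to the identity on the submodule $u(M)$ of $E$. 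The only subtlety here is keeping track of the left-module composition order, which is routine.

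The main point is then to show that $w$ is a unit of $S$. First note $u(M)\cap\Ker w=0$: if $x\in u(M)$ lies in $\Ker w$, then $x=(x)w=0$. Since $u(M)$ is essential in $E$, this forces $\Ker w=0$, so $w$ is a monomorphism, and hence $\Img w\cong E$ is injective and therefore a direct summand of $E$. But $\Img w$ contains $u(M)$ (as $w$ fixes $u(M)$ pointwise), so $\Img w$ is itself essential in $E$; an essential direct summand equals the whole module, so $\Img w=E$ and $w$ is also an epimorphism. Thus $w$ is an automorphism of $E$, i.e.\ a unit of $S$.

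Finally, multiplying the identity $\hat f\hat h_1+\hat g\hat h_2=w$ on the right by $w^{-1}$ yields $\hat f(\hat h_1 w^{-1})+\hat g(\hat h_2 w^{-1})=1_E$, whence $\hat fS+\hat gS=S$ and $\pair{\hat f,\hat g}$ is a right coprime pair in $S$. I do not expect a genuine obstacle: the identity $uw=u$ is bookkeeping, and the structural step rests on the standard facts that a monomorphism out of an injective module has injective image, that an injective submodule is a direct summand, and that an essential direct summand is the whole module. Note in particular that the argument uses neither continuity of $M$ nor idempotency, so it applies to an arbitrary left $R$-module $M$ with injective envelope $E$.
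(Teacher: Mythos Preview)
Your proposal is correct and follows essentially the same approach as the paper: both start from a Bezout identity $fh_1+gh_2=1_M$ in $T$, extend the coefficients to $S$, observe that $w=\hat f\hat h_1+\hat g\hat h_2$ satisfies $uw=u$, conclude that $w$ is a unit of $S$ because $u$ is an injective envelope, and then multiply by $w^{-1}$. The only difference is that the paper compresses the ``$w$ is a unit'' step into a single sentence, whereas you spell out the kernel/image argument in full; your added detail is correct and welcome.
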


\begin{proof}
Take $r,s \in T$ such that $fr+gs=1_M$. Then $u(\hat f \hat r+\hat g \hat s )=u$. As $u$ is an injective envelope, $\hat f \hat r + \hat g \hat s$ is an isomorphism and thus, there exists an $h \in S$ such that $1_E = \hat f \hat rh+\hat g \hat sh$. This means that $\pair{\hat f,\hat g}$ is a right coprime pair in $S$.
\end{proof}
\begin{theorem}\label{t:ContinuousHaveCompatibleLowerBounds}
Let $M$ be a continuous left $R$-module. Then $\End_R(M)$ is right strongly exchange.
\end{theorem}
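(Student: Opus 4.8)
The plan is to reduce the problem for $T=\End_R(M)$ to the already-established fact (Theorem~\ref{t:CotorsionAreStrongly}) that the endomorphism ring $S=\End_R(E)$ of an injective module is right strongly exchange, using Lemmas~\ref{l:CyclicContinuous} and~\ref{l:ContorsionPairEndomorphismRingInjectiveHull} as the bridge. So I start with a compatible descending chain of right coprime pairs $\{\pair{f_\alpha,g_\alpha}\mid \alpha<\kappa\}$ in $T$, with witnessing scalars $\{r_{\alpha\beta}\}$, $\{s_{\alpha\beta}\}\subseteq T$ satisfying $f_\beta=f_\alpha r_{\alpha\beta}$, $g_\beta=g_\alpha s_{\alpha\beta}$ and the cocycle identities. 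For each $\alpha$ choose an extension $\hat f_\alpha,\hat g_\alpha$ to $E$, and for each pair $\alpha<\beta$ choose extensions $\hat r_{\alpha\beta},\hat s_{\alpha\beta}$; by Lemma~\ref{l:ContorsionPairEndomorphismRingInjectiveHull} each $\pair{\hat f_\alpha,\hat g_\alpha}$ is a right coprime pair in $S$.

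The next step is to arrange that the lifted chain in $S$ is again \emph{compatible}. The naive lifts $\hat f_\alpha\hat r_{\alpha\beta}$ need not equal $\hat f_\beta$ on the nose, and the cocycle condition in $S$ is not automatic, so I would instead build the chain in $S$ by transfinite recursion: set $\hat f_\alpha,\hat g_\alpha$ at successor/initial steps as above but, having the chain defined below $\beta$, redefine $\hat r_{\gamma\beta}$ to be the unique-up-to-the-splitting map that genuinely witnesses $\hat f_\beta=\hat f_\gamma\hat r_{\gamma\beta}$ and satisfies the cocycle relations; here one uses that $\hat f_\beta S\le \hat f_\gamma S$ (which holds because $f_\beta T\le f_\gamma T$ and extensions to an injective envelope preserve inclusions of cyclic ideals in this direction — the easy half, not needing continuity). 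Concretely: since $\pair{f_\gamma,g_\gamma}$ is coprime in $T$ there are $p_\gamma,q_\gamma$ with $f_\gamma p_\gamma+g_\gamma q_\gamma=1_M$; one can then define the witnessing scalars in $S$ by composing the splitting data, exactly as in the limit-step argument inside the proof of Theorem~\ref{t:CotorsionAreStrongly}. This produces a compatible descending chain $\{\pair{\hat f_\alpha,\hat g_\alpha}\mid\alpha<\kappa\}$ in $S$.

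Now apply Theorem~\ref{t:CotorsionAreStrongly} (injective rings are left self-injective, hence left cotorsion): this chain has a minimal lower bound $\pair{\hat e,1-\hat e}$ in $S$ for some idempotent $\hat e\in S$, with $\hat e S\le \hat f_\alpha S$ and $(1-\hat e)S\le \hat g_\alpha S$ for all $\alpha$. The key point — where continuity of $M$ is essential — is to pull this idempotent back into $T$. Since $M$ is continuous, $T$ is known to be semiregular with idempotents liftable etc., but more directly: $\hat e\in S$ restricted appropriately gives, via the standard correspondence for continuous modules, an idempotent $e\in T$; and by Lemma~\ref{l:CyclicContinuous} applied with $f=f_\alpha$ (and its extension $\hat f_\alpha$) and the idempotent $e$, the inclusion $\hat e S\le \hat f_\alpha S$ forces $eT\le f_\alpha T$, and symmetrically $(1-e)T\le g_\alpha T$ from $(1-\hat e)S\le\hat g_\alpha S$ (noting $1-e$ is also idempotent and $1_E-\hat e$ extends $1_M-e$). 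Hence $\pair{e,1-e}$ is a lower bound of the original chain in $T$, and it is minimal by Proposition~\ref{p:MinimalCoprimePairs}. Therefore $T=\End_R(M)$ is right strongly exchange.

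The main obstacle I anticipate is the middle step: producing a genuinely \emph{compatible} lifted chain in $S$, since Lemma~\ref{l:ContorsionPairEndomorphismRingInjectiveHull} only lifts individual coprime pairs and says nothing about the transition scalars or the cocycle identities. The remedy is to not lift the $r_{\alpha\beta}$ directly but to reconstruct all the witnessing data in $S$ from scratch along the recursion, mirroring the direct-limit / splitting bookkeeping already carried out in the proof of Theorem~\ref{t:CotorsionAreStrongly}; the only genuinely new input beyond that bookkeeping is Lemma~\ref{l:CyclicContinuous}, which is exactly what licenses the final descent of the idempotent from $S$ back to $T$.
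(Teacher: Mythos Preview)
Your overall strategy is exactly the paper's: lift the chain to $S=\End_R(E)$, use that $S$ is right strongly exchange (Theorem~\ref{t:CotorsionAreStrongly}), then descend the resulting idempotent to $T$ via continuity (the paper cites \cite[Theorem 2.8]{MohamedMuller}) and Lemma~\ref{l:CyclicContinuous}. Two small corrections are in order. First, the parenthetical claim that for independently chosen extensions one automatically has $\hat f_\beta S\le \hat f_\gamma S$ is false in general; the paper sidesteps this by lifting the \emph{transition scalars} $r_{\alpha\beta},s_{\alpha\beta}$ (building the cocycle identities by transfinite induction, using a direct-limit argument and injectivity of $E$ at limit stages) and then \emph{defining} $k_\alpha:=k_0\hat r_{0\alpha}$, $l_\alpha:=l_0\hat s_{0\alpha}$, which makes compatibility automatic---this is the precise version of the ``reconstruct the witnessing data from scratch'' step you anticipated. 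Second, $S$ is not asserted to be left self-injective but left \emph{pure}-injective (by \cite[Proposition 3]{ZimmermannZimmermann}), which is what yields left cotorsion; the rest of your argument is unaffected.
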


\begin{proof}
We follow the notation fixed in p. 11. Take a compatible descending chain of right coprime pairs in $T$, $\{\pair{f_\alpha,g_\alpha}\mid \alpha < \kappa\}$, and families of elements of $T$, $\{r_{\alpha\beta}\mid \alpha < \beta\}$ and $\{s_{\alpha\beta}\mid \alpha < \beta\}$, making the chain compatible. First, we are going to construct families of endomorphisms of $E$, $\{\hat r_{\alpha\beta}\mid \alpha < \beta\}$ and $\{\hat s_{\alpha\beta}\mid \alpha < \beta\}$, extending the families $r_{\alpha\beta}$ and $s_{\alpha\beta}$, respectively, to $E$, such that
\begin{enumerate}
\item[(A)] $\hat r_{\alpha\gamma}\hat r_{\gamma \beta} = \hat r_{\alpha\beta}$ and

\item[(B)] $\hat s_{\alpha\gamma}\hat s_{\gamma \beta} = \hat s_{\alpha\beta}$,
\end{enumerate}
for each $\alpha < \gamma < \beta$. We make the construction of $\{\hat r_{\alpha\beta}\mid \alpha < \beta\}$ by transfinite induction on $\beta$. The construction of the family $\{\hat s_{\alpha\beta}\mid \alpha < \beta\}$ is made similarly. 

If $\beta=0$, there is nothing to construct. If $\beta=1$, choose, using the injectivity of $E$, an extension $\hat r_{01}:E\rightarrow E$ of $r_{01}$.

Let $\beta < \kappa$, and suppose that we have constructed $\hat r_{\alpha \gamma}$ for each $\alpha < \gamma < \beta$, and let us construct $\hat r_{\alpha \beta}$. If $\beta$ is successor, say $\beta = \gamma+1$, take $\hat r_{\gamma\gamma+1}\in S$ an extension of $r_{\gamma\gamma+1}$ and define $\hat r_{\alpha\beta}=\hat r_{\alpha\gamma}\hat r_{\gamma\beta}$. It is easy to check that $\hat r_{\alpha\beta}$ is an extension of $r_{\alpha\beta}$ and that the compatibility conditions (A) hold.

Suppose now that $\beta$ is a limit ordinal. We have two direct systems of left $R$-modules: ${\mathcal S}_1=(M_\alpha, r_{\alpha\gamma}\mid \alpha < \gamma \in \beta)$ and ${\mathcal S_2}=(E_\alpha, \hat r_{\alpha\gamma} \mid \alpha < \gamma \in \beta)$, where $M_\alpha=M$ and $E_\alpha = E$ for each $\alpha < \beta$. Denote by $(X,m_\alpha:M\rightarrow X \mid \alpha < \beta)$ and $(Y,n_\alpha:E\rightarrow Y \mid \alpha < \beta)$ their direct limits. Since each $\hat r_{\alpha \gamma}$ is an extension of $r_{\alpha\gamma}$, $u$ defines a monomorphism between the direct systems $\mathcal S_1$ and $\mathcal S_2$, which induces a monomorphism $v:X \rightarrow Y$. Similarly, $( r_{\alpha\beta}:M_\alpha \rightarrow M \mid \alpha < \beta)$ is a direct system of morphisms from $\mathcal S_1$ to $M$. So, it induces a morphism $r:X \rightarrow M$. Finally, using the injectivity of $E$ we can find a $\hat r:Y \rightarrow E$ making the diagram
\begin{displaymath}
\begin{tikzcd}
X \arrow{r}{v} \arrow{d}{r} & Y \arrow[dotted]{d}{\hat r}\\
M \arrow{r}{u} & E
\end{tikzcd}
\end{displaymath}
commutative. Set now $\hat r_{\alpha \beta} = n_\alpha \hat r$ for each $\alpha < \beta$. It is easy to check that $\hat r_{\alpha \beta}$ is an extension of $r_{\alpha\beta}$ that satisfies the compatibility conditions (A). This finishes the construction.

Take now $k_0$ and $l_0$ extensions of $f_0$ and $g_0$ to endomorphisms of $E$, respectively, and define
\begin{displaymath}
k_\alpha = k_0\hat r_{0\alpha} \textrm{ and } l_\alpha =  l_0\hat s_{0\alpha}
\end{displaymath}
for every $\alpha < \kappa$. Since $k_\alpha$ and $l_\alpha$ are extensions of $f_{\alpha}$ and $g_{\alpha}$ to $E$, respectively, $\pair{k_\alpha,l_\alpha}$ is a right coprime pair in $S$ by Lemma \ref{l:ContorsionPairEndomorphismRingInjectiveHull}. We obtain in this way a compatible descending chain $\{\pair{k_\alpha,l_\alpha} \mid \alpha < \kappa\}$ of right coprime pairs in $S$. Since $S$ is left pure-injective (in particular, left cotorsion) by \cite[Proposition 3]{ZimmermannZimmermann}, the chain has a minimal lower bound $\pair{k,l}$ by Theorem \ref{t:CotorsionAreStrongly}. So, there exists an idempotent $e$ of $S$ such that $\pair{k,l}=\pair{e,1-e}$ by Proposition \ref{p:MinimalCoprimePairs}. By \cite[Theorem 2.8]{MohamedMuller}, there exists an idempotent $e' \in T$ such that $e$ is an extension of $e'$ to $E$ and, consequently, $1-e$ is also an extension of $1-e'$ to $E$. Since $\pair{e,1-e} \leq \pair{k_\alpha,l_\alpha}$ for each $\alpha < \kappa$, Lemma \ref{l:CyclicContinuous} says that $\pair{e',1-e'} \leq \pair{f_\alpha,g_\alpha}$ for each $\alpha < \kappa$. This means that $\pair{e',1-e'}$ is a minimal right coprime pair below the initial chain, which concludes the proof.
\end{proof}

\begin{corollary}
Suppose that $R$ is a left continuous ring. Then $R$ is a right strongly exchange ring.
\end{corollary}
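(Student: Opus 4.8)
The plan is to deduce this immediately from Theorem \ref{t:ContinuousHaveCompatibleLowerBounds} by specializing to the regular module $M={_R}R$. By definition, a ring $R$ is left continuous precisely when the left module ${_R}R$ satisfies conditions (C1) and (C2), that is, when ${_R}R$ is a continuous left $R$-module in the sense recalled just before Lemma \ref{l:CyclicContinuous}. Thus Theorem \ref{t:ContinuousHaveCompatibleLowerBounds} applies with $M={_R}R$ and yields that $\End_R({_R}R)$ is a right strongly exchange ring.

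It then only remains to identify $\End_R({_R}R)$ with $R$ itself. With the convention fixed in the introduction that homomorphisms of left modules act on the right and compose accordingly, every $f\in\End_R({_R}R)$ is right multiplication by the element $(1)f\in R$, and the assignment $f\mapsto (1)f$ is a \emph{ring} isomorphism $\End_R({_R}R)\cong R$ (an isomorphism rather than an anti-isomorphism, precisely because in this setting morphisms act on the opposite side of the scalars). Transporting the right strongly exchange property of $\End_R({_R}R)$ along this isomorphism gives that $R$ is right strongly exchange.

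There is essentially no obstacle in this corollary: all the real content lives in Theorem \ref{t:ContinuousHaveCompatibleLowerBounds}, and the only point requiring a moment's care is the bookkeeping of sides in the identification $\End_R({_R}R)\cong R$, which is routine given the conventions already in force. (Equivalently, one could avoid endomorphism rings altogether by simply rereading the proof of Theorem \ref{t:ContinuousHaveCompatibleLowerBounds} with $T=S=R$ and $u\colon R\to E$ an injective envelope of ${_R}R$, but quoting the theorem as a black box is cleaner.)
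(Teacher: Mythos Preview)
Your proof is correct and is exactly the intended argument: the paper states this corollary without proof precisely because it follows immediately from Theorem~\ref{t:ContinuousHaveCompatibleLowerBounds} applied to $M={_R}R$, together with the identification $\End_R({_R}R)\cong R$ under the paper's conventions. Your care in checking that this identification is a ring isomorphism (rather than an anti-isomorphism) is the only nontrivial point, and you handled it correctly.
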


\begin{remark}\label{R:StronglyExchangeNonSelfInjective}
The converse of this result is not true. For instance, a commutative local domain is strongly exchange by Proposition \ref{p:CharacterizationRingsCoprimePairs}. But it is not continuous, unless it is a field, since it does not satisfy (C2). 

On the other hand, any von Neumann regular left continuous ring which is not left self-injective (see e.g. \cite[Example 13.8]{Goodearl91}) is another example of a right strongly exchange ring which is not left cotorsion.
\end{remark}


\section{Strongly exchange rings. Main properties}

We have proved in the previous section that local, left cotorsion and left continuous rings are right strongly exchange. All these rings are semiregular (see \cite[Theorem 6]{GuilHerzog} and \cite[Proposition 3.5]{MohamedMuller}). We are going to show in this section that right strongly exchange rings are, in general semiregular. Let us begin with a couple of technical lemmas that we will need later on.

\begin{lemma}\label{l:Intersection}
Let $\pair{a,b}$ be a right coprime pair such that $aR \cap bR \leq J(R)$. Then $(aR+J(R))\cap (bR+J(R)) \leq J(R)$.
\end{lemma}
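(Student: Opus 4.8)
The plan is to take an arbitrary element $x\in(aR+J(R))\cap(bR+J(R))$ and show directly that $x\in J(R)$. The tool is the B\'ezout identity $1=ar+bs$ provided by the coprimality of $\pair{a,b}$; it will be used to ``correct'' the relevant element so that it lands in $aR\cap bR$, which is contained in $J(R)$ by hypothesis.

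Concretely, I would write $x=ap+j=bq+k$ with $p,q\in R$ and $j,k\in J(R)$, and put $j'=k-j\in J(R)$, so that $ap=bq+j'$. The key step is to expand $j'=(ar+bs)j'=arj'+bsj'$ and substitute: this yields $a(p-rj')=b(q+sj')$. The left-hand side lies in $aR$ and the right-hand side in $bR$, so this common element lies in $aR\cap bR\subseteq J(R)$, whence $a(p-rj')\in J(R)$. Since $arj'\in J(R)$ as well ($J(R)$ being a two-sided ideal), we get $ap=a(p-rj')+arj'\in J(R)$, and therefore $x=ap+j\in J(R)$, as required.

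The point to be careful about — and what I would flag as the substantive content — is that one cannot conclude merely by passing to $R/J(R)$: for submodules $A,B,N$ of a module, $A\cap B\subseteq N$ does \emph{not} in general imply $(A+N)\cap(B+N)\subseteq N$ (take $A=\mathbb{Z}(1,0)$, $B=\mathbb{Z}(0,1)$, $N=\mathbb{Z}(1,1)$ in $\mathbb{Z}^2$). So the coprimality hypothesis is genuinely used, and it enters exactly through the B\'ezout identity that lets us rewrite $j'=arj'+bsj'$ and thereby replace $ap$ by an element of $aR\cap bR$ modulo $J(R)$. Beyond that observation the argument is a short direct computation with no further obstacles.
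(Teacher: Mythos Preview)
Your proof is correct. Both your argument and the paper's are short direct computations that exploit the B\'ezout identity $1=ar+bs$ to show that the ``$aR$ part'' of $x$ already lies in $J(R)$, but the manipulations differ. The paper first observes that $bsa=a(1-ra)$ and $arb=b(1-sb)$ both lie in $aR\cap bR\subseteq J(R)$, and then decomposes $ap=(ar)ap+(bs)ap$: the second summand is in $J(R)$ because $bsa\in J(R)$, and the first is handled by substituting $ap=bq+j'$ and using $arb\in J(R)$. Your route is more direct: you multiply the error term $j'$ itself by $1=ar+bs$ and rearrange to exhibit $a(p-rj')=b(q+sj')\in aR\cap bR$ in one step, invoking the hypothesis only once rather than twice. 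Your remark that one cannot simply pass to $R/J(R)$ is also a useful clarification of why the coprimality is genuinely needed.
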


\begin{proof}
Let $y \in (aR+J(R)) \cap (bR+J(R))$ and take $r_1,s_2 \in R$, $j_1,j_2 \in J(R)$ such that $y=ar_1+j_1=bs_2+j_2$.

Since $\pair{a,b}$ is a right coprime pair, there exist $s,r \in R$ such that $1=ar+bs$. Then $bsa=a(1-ra) \in aR \cap bR \leq J(R)$ and $arb=b(1-sb) \in aR \cap bR \leq J(R)$. Consequently, $(1-ar)ar_1 \in J(R)$.

On the other hand, multiplying the identity $ar_1 = bs_2+j_2-j_1$ by $ar$ and using that $arb \in J(R)$, we conclude that $arar_1 \in J(R)$.

Finally, $ar_1 = (1-ar)ar_1+arar_1 \in J(R)$, so that $y \in J(R)$ as well.
\end{proof}

\begin{lemma}\label{l:StrictCoprimePairsRadical}
Let $\pair{a,b}$ be a right coprime pair. 
\begin{enumerate}
\item If $aR \cap bR$ is not contained in $J(bR)$, then there exists a $z \in R$ such that $\pair{a,z}$ is a right coprime pair strictly less than $\pair{a,b}$.

\item If $aR \cap bR$ is not contained in $J(R)$, there exists a right coprime pair $\pair{x,y}$ strictly less than $\pair{a,b}$ and such that $x=a$ or $y=b$.
\end{enumerate}
\end{lemma}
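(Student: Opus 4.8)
The plan is to handle the two parts of Lemma~\ref{l:StrictCoprimePairsRadical} separately, with part~(1) being the substantive one and part~(2) a quick corollary.

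For part~(1): given $aR\cap bR\not\leq J(bR)$, pick $w\in aR\cap bR$ with $w\notin J(bR)$. Since $w\in bR$, write $w=bt$ for some $t\in R$; then $bt\notin J(bR)$, which means $t\notin \mathbf{r}_R(b)+\{x: bx\in J(bR)\}$, i.e. right multiplication by $t$ does not land inside the radical of $bR$. The point is to use the fact that $bt$ generates a submodule of $bR$ not contained in its radical, hence there should be a maximal submodule of $bR$ missing $bt$; equivalently, $btR+J(bR)\not\leq$ that maximal submodule. Actually the cleaner route: since $w=bt\notin J(bR)$, the cyclic module $btR$ is not superfluous in $bR$, so there is a proper submodule $N\lneq bR$ with $btR+N=bR$. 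Write $N=b'R$? That need not be cyclic. Instead I would argue directly: I want $z\in R$ with $\pair{a,z}$ coprime and $zR\lneq bR$ and $aR$ unchanged. Take $z$ so that $zR=$ a suitable proper submodule. Since $w=bt\in aR$, write $w=au$; then from $1=ar+bs$ we get $b=b(ar+bs)\cdot$(something)... Let me instead set $z=bs$ where we modify: we have $1=ar+bs$, so $\pair{ar,bs}\leq\pair{a,b}$. If $bsR\lneq bR$ we may be done after checking $arR=aR$; but in general $arR\neq aR$.

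The honest approach: I would show that $bt$ being outside $J(bR)$ lets us replace $b$ by an element $z$ with $btR\leq zR\lneq bR$ while keeping $\pair{a,z}$ coprime and $aR$ fixed. Concretely, since $aR+bR=R$ and $w=bt\in aR$, we have $aR+btR\supseteq aR+\{$stuff$\}$; I claim $aR+ btR\ni$ enough to still be coprime with some proper piece of $bR$. Write $1=ar+bs$. Then $bs = b - bar\cdot$? no: $1-ar=bs$, so $b(1-ar)=bs\cdot$... actually $b=b\cdot 1=b(ar+bs)$, useless since noncommutative. Rather: $\pair{a,b}=\pair{a,b}$ and we want $z$ with $aR+zR=R$, $zR\subsetneq bR$. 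Equivalently $z$ with $bR = zR + $ (something already in $aR+zR$). Since $w=bt\notin J(bR)$, choose a maximal submodule $P$ of $bR$ with $w\notin P$ (exists: $w R + J(bR)\not\leq$ some maximal submodule, because if $w$ were in every maximal submodule it'd be in $J(bR)$). Now $P$ is a proper submodule, $P+wR=bR$ by maximality. The obstacle is that $P$ need not be principal, so $\pair{a,P}$ isn't literally a coprime pair of elements. The fix: I will instead take $z\in bR$ such that $zR+aR=R$ — pick $z=bs+bt\cdot c$ for a suitable $c$, engineered so that $zR\subseteq bR$ is proper. Actually the simplest legitimate move: since $w=bt\in aR\cap bR$, we have $aR+(bR$ with $bt$ removed$)$; precisely, I claim $aR + \mathbf{something}=R$ where that something is generated by $\{b(1-tc): c\in R\}$-type elements. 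I think the intended argument picks $z$ with $zR = b(1-e)R$-ish using that $w$ generates a direct-summand-complemented piece — but without regularity that's not available, so the real proof must use only that a non-superfluous cyclic submodule can be "split off" weakly via a maximal submodule, and then must argue that one can choose a \emph{principal} complement-type ideal. I expect the main obstacle to be exactly this: producing a \emph{cyclic} right ideal $zR\subsetneq bR$ with $aR+zR=R$, and I would resolve it by taking $z = bs'$ where $s'$ is chosen inside the coset so that $w=bt\notin zR$ — using $1=ar+bs$, replace $s$ by $s' = s + tc$ appropriately so that $bt\notin bs'R$ while still $ar+bs'=1-btc+btc\cdot$... this needs $bt(1-c)\in\ldots$; I'd tune $c$ using that $bt\notin J(bR)$ to guarantee $bs'R=bR$ fails, i.e. $bt\notin bs'R$.

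For part~(2): apply part~(1) symmetrically. If $aR\cap bR\not\leq J(R)$, then since $J(bR)=bR\cap J(R)$ (standard), $aR\cap bR\not\leq J(bR)$, so part~(1) gives $\pair{a,z}<\pair{a,b}$ with first coordinate $x=a$ unchanged — done with $y=z$. So part~(2) is immediate once $J(bR)\subseteq J(R)$, which holds because $J(bR) = bR\cap J(R)$ for the cyclic projective-ish module $bR$; more elementarily $J(bR)\subseteq J(R)$ always since $bR\cap J(R)$ is the radical of $bR$ whenever... I should just cite that $J(bR)=bR\cap J(R)$ (true when $bR$ is a direct summand; in general one has $J(bR)\supseteq bR\cap J(R)$, but the inclusion needed, $J(bR)\subseteq J(R)$, can fail) — so more carefully, part~(2) should be read as: $aR\cap bR\not\leq J(R)$ implies $aR\cap bR\not\leq J(bR)$ is \emph{not} automatic, hence part~(2) likely has its own short argument symmetric to (1) producing either $\pair{a,y}<\pair{a,b}$ or $\pair{x,b}<\pair{a,b}$ by running the construction on whichever side works. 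I would therefore prove (2) directly: pick $w\in aR\cap bR\setminus J(R)$; then $w$ is in $aR$ and in $bR$; $w\notin J(R)$ means $wR$ is not superfluous in $R$, so $wR$ is not superfluous in $aR$ or not in $bR$ (if it were superfluous in both, $wR+\Rad$-complements...), hence $wR\not\leq J(aR)$ or $wR\not\leq J(bR)$, and applying part~(1) (with the appropriate labelling) yields the claim with $x=a$ or $y=b$.
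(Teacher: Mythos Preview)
Your setup for part~(1) is correct --- pick $w \in (aR \cap bR) \setminus J(bR)$, so $wR$ is not superfluous in $bR$ and there is a proper submodule $N \lneq bR$ with $wR + N = bR$ --- but you then get stuck on ``$N$ need not be cyclic'' and wander through several dead ends (maximal submodules, tuning $s' = s + tc$, etc.). The missing observation is immediate: since $b \in bR = wR + N$, write $b = wr + z$ with $z \in N$. Then $zR \leq N \lneq bR$, and since $w \in aR$ we have $aR + zR \supseteq wR + zR \ni b$, hence $aR + zR \supseteq aR + bR = R$. Thus $\pair{a,z}$ is a right coprime pair with $zR \lneq bR$, so $\pair{a,z} < \pair{a,b}$. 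No maximal submodules, no coefficient-tuning, no regularity needed; this is exactly the paper's argument.

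For part~(2) your final outline is the right one, and matches the paper: take $w \in (aR \cap bR) \setminus J(R)$, show $w \notin J(aR) \cap J(bR)$, then apply~(1) or its symmetric version. But the implication ``$wR$ superfluous in both $aR$ and $bR$ $\Rightarrow$ $wR$ superfluous in $R$'' is precisely what must be proved, and you leave it as an ellipsis. The argument is modularity: if $wR + X = R$ for some right ideal $X$, then $aR = wR + (aR \cap X)$ and $bR = wR + (bR \cap X)$; if $wR$ is superfluous in both $aR$ and $bR$ this forces $aR \leq X$ and $bR \leq X$, whence $R = aR + bR \leq X$. So $wR$ is superfluous in $R$, i.e.\ $w \in J(R)$, a contradiction. (Your earlier attempt to reduce~(2) to~(1) via $J(bR) \subseteq J(R)$ is rightly abandoned --- that inclusion can fail for a non-summand cyclic ideal.)
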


\begin{proof}
(1) Choose an $x \in (aR \cap bR)\setminus J(bR)$. Then $xR$ is not superfluous in $bR$ and there exists a proper submodule $X \leq bR$ with $xR + X = bR$. In particular, there exists a $z \in X$ with $xR+zR=bR$. Then, as $x \in aR$, $aR+zR=R$, which means that $\pair{a,z}$ is a coprime pair satisfying $\pair{a,z} < \pair{a,b}$.

(2) Take $x \in (aR \cap bR)\setminus J(R)$. We claim that $x \notin J(aR) \cap J(bR)$. Suppose, on the contrary, that $x \in J(aR) \cap J(bR)$ and let $X$ be a right ideal of $R$ such that $xR+X = R$. Then, by modularity, $aR = xR+(aR \cap X)$ and $bR = xR+(bR\cap X)$, which implies that $aR = aR \cap X$ and $bR = bR \cap X$, since $xR$ is superfluous in $aR$ and $bR$. In particular, $R=aR+bR \leq X$. This means that $xR$ is superfluous in $R$ and thus, $x \in J(R)$. A contradiction which proves our claim.

Therefore, $x$ does not either belong to $J(aR)$ or $J(bR)$. Now, the result follows from (1).
\end{proof}

\begin{theorem}\label{t:ChainsImpliesSemiregular}
Every right strongly exchange ring is semiregular.
\end{theorem}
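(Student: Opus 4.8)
The plan is to show that $R$ is von Neumann regular modulo $J(R)$ and that idempotents lift modulo $J(R)$. Since $R$ is right strongly exchange, every basic coprime pair $\pair{r,1-r}$ lies above a minimal coprime pair (every descending chain consisting of a single term is trivially compatible, so in particular $\pair{r,1-r}$ itself has a minimal lower bound). By Proposition~\ref{p:MinimalCoprimePairs} and Proposition~\ref{p:CharacterizationRingsCoprimePairs}(5), this already shows $R$ is an exchange ring, hence idempotents lift modulo $J(R)$ and idempotents in $R/J(R)$ split. So the real content is to prove that $R/J(R)$ is von Neumann regular, equivalently (given the exchange property) that $R/J(R)$ has no nonzero nilpotent… more precisely, it suffices to show every cyclic right ideal of $R/J(R)$ is a direct summand.

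First I would fix $r \in R$ and aim to produce an idempotent $e$ with $eR + J(R) = rR + J(R)$; this gives regularity of $R/J(R)$. Using that $R$ is strongly exchange, take a minimal lower bound $\pair{e,1-e} \le \pair{r,1-r}$, so $eR \subseteq rR$ and $(1-e)R \subseteq (1-r)R$. The standard exchange-ring argument then shows $r - e \in J(R)$: writing the membership relations out, one gets $e = r u$ and $1 - e = (1-r)v$ for suitable $u,v$, whence $r - e = r - ru = r(1-u)$ and also $r - e = (1-r)v - (1-r) = -(1-r)(1-v)$, and combining these with $1 = ar + b(1-r)$ forces $r-e$ to lie in every maximal right ideal. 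Thus $\bar r = \bar e$ in $R/J(R)$, and $\bar e R/J(R) = \bar r R/J(R)$ is a direct summand. Since this holds for all $r$, and cyclic right ideals of $R/J(R)$ are exactly the images of $rR$, we conclude $R/J(R)$ is von Neumann regular.

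For idempotent lifting: given an idempotent $\bar g \in R/J(R)$, lift it to any $r \in R$ with $r^2 - r \in J(R)$, apply the above to obtain an idempotent $e \in R$ with $e - r \in J(R)$, so $\bar e = \bar g$. This completes the proof that $R$ is semiregular in the sense of Nicholson.

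The main obstacle I anticipate is not the lifting of idempotents — that follows cleanly from the exchange property, which in turn is essentially immediate from the strongly exchange hypothesis applied to trivial (length-one) chains — but rather verifying carefully that a minimal lower bound $\pair{e,1-e}$ of the basic pair $\pair{r,1-r}$ forces $r - e \in J(R)$ rather than merely $eR + (1-e)R$-relations modulo something larger; one must check $r - e$ is superfluous-generating, i.e. $(r-e)R \ll R$, and for this the technical Lemmas~\ref{l:Intersection} and~\ref{l:StrictCoprimePairsRadical} on intersections $aR \cap bR$ landing in $J(R)$ are presumably the intended tool: minimality of $\pair{e,1-e}$ forces $eR \cap (1-e)R = 0$, and one wants to transfer this down to control $rR \cap (1-r)R$ modulo $J(R)$. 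So I expect the author's proof to invoke those lemmas precisely at the point where one argues that the minimal lower bound is "close enough" to the original basic pair that the difference lands in the radical.
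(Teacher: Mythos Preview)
Your argument has a genuine gap: the claim that $r-e\in J(R)$ for a minimal lower bound $\pair{e,1-e}\le\pair{r,1-r}$ is false. You correctly observe that $r-e\in rR\cap(1-r)R$, but this intersection need not lie in $J(R)$. Take $R=M_2(k)$ for a field $k$ (semisimple, hence left self-injective, hence right strongly exchange by Theorem~\ref{t:CotorsionAreStrongly}) and $r=\left(\begin{smallmatrix}0&1\\0&0\end{smallmatrix}\right)$. Then $1-r$ is a unit, so $\pair{0,1}$ is a perfectly good minimal lower bound of $\pair{r,1-r}$, yet $r-0=r\notin J(R)=0$. More conceptually, your proof only invokes the strongly exchange hypothesis for length-one chains, which yields nothing beyond the ordinary exchange property; since exchange rings need not be semiregular, no argument using only that much can succeed.

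The paper's proof uses the strongly exchange hypothesis in an essential, transfinite way. It does \emph{not} try to show $r-e\in J(R)$. Instead it proves (by contradiction) the claim: for every coprime pair $\pair{a,b}$ there exists $x$ with $\pair{a,x}\le\pair{a,b}$ and $aR\cap xR\le J(xR)$. Assuming this fails, one builds by transfinite recursion a strictly descending tower $\pair{a,b_\alpha}$ indexed by \emph{all} ordinals: at successors Lemma~\ref{l:StrictCoprimePairsRadical} produces a strictly smaller pair (since $aR\cap b_\beta R\nsubseteq J(b_\beta R)$ by assumption), and at limits the strongly exchange property supplies a minimal lower bound whose idempotent second coordinate serves as the new $b_\alpha$; this contradicts the cardinality of $R$. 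Once the claim holds, apply it to $\pair{a,1-a}$ and invoke Lemma~\ref{l:Intersection} to get $(a+J(R))R/J(R)\oplus(x+J(R))R/J(R)=R/J(R)$, proving regularity of $R/J(R)$. So your instinct that Lemmas~\ref{l:Intersection} and~\ref{l:StrictCoprimePairsRadical} are the intended tools is right, but they are used to drive a transfinite descent, not to compare $r$ with a single idempotent.
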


\begin{proof}
First, we claim that for any right coprime pair $\pair{a,b}$ there exists an element $x$ such that $\pair{a,x}$ is a right coprime pair, $\pair{a,x} \leq \pair{a,b}$ and $aR \cap xR \leq J(xR)$.

Suppose that our claim is false. We are going to construct by transfinite induction an element $b_\alpha$, for each ordinal $\alpha$, such that 
\begin{enumerate}[label=(\alph*)]
\item $\pair{a,b_\alpha}$ is a right coprime pair;

\item $a R \cap b_\alpha R$ is not contained in $J(b_\alpha R)$;

\item $\pair{a,b_{\alpha}} < \pair{a,b_{\gamma}}$, for each $\gamma < \alpha$;

\item if $\alpha>0$ is limit, then $b_\alpha$ is idempotent.
\end{enumerate}

For $\alpha = 0$, set $b_0=b$.

Let now $\alpha>0$ be an ordinal and assume that we have just constructed, for each $\gamma < \alpha$, elements  $b_\gamma\in R$ satisfying the above conditions. If $\alpha$ is successor, say $\alpha = \beta+1$, apply Lemma \ref{l:StrictCoprimePairsRadical} to get an element $b_{\alpha}$ such that $\pair{a,b_{\alpha}}$ is a right coprime pair strictly smaller than $\pair{a,b_\beta}$. Since we are assuming that our claim is false, $aR \cap b_{\alpha}R\nsubseteq J(b_{\alpha}R)$.

Suppose now that $\alpha$ is limit. If the set $\{\gamma < \alpha\mid \gamma \textrm{ is limit}\}$ is not cofinal in $\alpha$, then there exists a limit ordinal $\beta < \alpha$ such that $\alpha = \beta+\omega$. Then the descending chain of right coprime pairs, $\{\pair{a,b_{\beta+n}}\mid n < \omega\}$, is compatible and, by hypothesis, it has a minimal lower bound $\pair{x,b_\alpha}$. By Proposition \ref{p:MinimalCoprimePairs}, $b_\alpha$ can be chosen to be idempotent. Since $xR \leq  aR$, $\pair{a,b_\alpha}$ is a right coprime pair below the chain. Moreover, $\pair{a,b_\alpha} < \pair{a,b_\gamma}$ for each $\gamma < \alpha$, since the identity $\pair{a,b_\alpha}=\pair{a,b_\gamma}$ for some $\gamma < \alpha$, implies that $\pair{a,b_\gamma}=\pair{a,b_{\gamma+1}}$, a contradiction. Finally, $aR \cap b_\alpha R$ is not contained in $J(b_\alpha R)$ since we are supposing that our initial claim is false.

It remains to prove the case in which $\{\gamma < \alpha \mid \gamma \textrm{ is limit}\}$ is cofinal in $\alpha$. If this holds, we have the descending chain of right coprime pairs $\{\pair{a,b_\gamma}\mid \gamma < \alpha \textrm{ limit}\}$.  The arguments used in the proof of Proposition \ref{p:PropertiesChainsCoprimePairs} show that this chain is compatible. So we can find by hypothesis a minimal lower bound $\pair{x,b_\alpha}$, in which $b_\alpha$ idempotent by Proposition \ref{p:PropertiesChainsCoprimePairs}. Reasoning as above, we get that $\pair{a,b_\alpha}$ is a right coprime pair below the chain such that $aR \cap b_\alpha R\nsubseteq J(b_\alpha R)$. This finishes the construction.

We have constructed, for any ordinal $\alpha$, a right coprime pair $\pair{a,b_\alpha}$ such that $\pair{a,b_\alpha}<\pair{a,b_\beta}$ if $\alpha<\beta$. In particular, this means that $b_\alpha\neq b_\beta$ if $\alpha<\beta$ and therefore, the cardinality of $R$ must be at least the cardinality of $\alpha$ for any ordinal $\alpha$. A contradiction that proves our initial claim.

Finally, given any $a \in R$, we can apply our claim to the right coprime pair $\pair{a,1-a}$ to get an element $x\in R$ such that $\pair{a,x}$ is a right coprime pair with $aR \cap xR \leq J(xR)$ and $\pair{a,x}\leq \pair{a,1-a}$. By Lemma \ref{l:Intersection}, 
\begin{displaymath}
(a+J(R))R/J(R) \bigoplus (x+J(R))R/J(R)=R/J(R)
\end{displaymath}
which means that $(a+J(R))R/J(R)$ is a direct summand of $R/J(R)$. This proves that $R/J(R)$ is a von Neumann regular ring.

In order to prove that idempotents lift modulo the Jacobson radical of $R$, note that $R$ is an exchange ring and that exchange rings have this property \cite[Theorem 2.1 and Proposition 1.5]{Nicholson}.
\end{proof}

The converse of this result is not true, since there exist von Neumann regular rings which are not right strongly exchange (see Example \ref{e:RegularRingNotDescendingChain}).

\begin{remark}
	We have proved in Section 3 that local, left self-injective, left pure-injective, left cotorsion and left continuous rings are right strongly exchange. All these rings satisfy that they are left continuous modulo their Jacobson radical (see \cite[Theorem 8]{GuilHerzog} and \cite[Theorem 3.11]{MohamedMuller}). But we do not know whether any right strongly exchange rings enjoys this property. 
	
	If we strengthen the definition of right strongly exchange ring a little bit to include all compatible descending systems of right coprime pairs instead of just compatible descending chains of right coprime pairs, we can prove that the ring is left continuous modulo its Jacobson radical. However, we do not know whether any left continuous ring satisfies this last condition.
\end{remark}

Let us state the ideas of the preceding remark more precisely.
\begin{definition} Let $R$ be a ring.
	
\begin{enumerate}
\item A descending system of right coprime pairs is a downwards directed subset of $\RCP(R)$. I.e., a subset $\{p_i\mid i \in I\}\subseteq \RCP(R)$ indexed by a directed set $I$ such that $p_j \leq p_i$ whenever $i \leq j$.

\item A descending system of right coprime pairs $\{p_i\mid i \in I\}$ is called compatible if for each $i \in I$, there exists a pair of generators $(a_i,b_i)$ of $p_i$, and families of scalars $\{r_{ij}\mid i < j\}$ and $\{s_{ij} \mid i < j\}$, satisfying that
\begin{enumerate}
\item $a_j=a_ir_{ij}$ and $b_j=b_is_{ij}$;

\item $r_{ik}=r_{ij}r_{jk}$ and $s_{ik}=s_{ij}s_{jk}$.
\end{enumerate}
\end{enumerate}

for each $i < j < k$ in $I$.
\end{definition}
\medskip

As in the case of chains, if $\{\pair{a_i,b_i}\mid i \in I\}$ is a compatible descending system of right coprime pairs, we will assume that $(a_i,b_i)$ is the pair of generators satisfying the compatibility condition of the preceding definition.

Recall that the \textit{singular submodule} of a right $R$-module $M$ is its submodule
\begin{displaymath}
\Z(M)=\{x \in M \mid {\mathbf r}_R(x) \textrm{ is essential in } R_R\}.
\end{displaymath}
A module $M$ is called \textit{singular} if $M=\Z(M)$ and \textit{non-singular} if $\Z(M)=0$.

\begin{lemma}\label{l:InclusionsInEssentialExtensions}
Let $M$ be a non-singular module and $K$, $N$ and $L$, submodules of $M$ such that both inclusions $K \leq N$ and $K \leq L$ are essential. If $\frac{M}{L}$ is non-singular (for instance, if $L$ is a direct summand of $M$), then $N \leq L$.
\end{lemma}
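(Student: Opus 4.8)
The plan is to exploit the non-singularity hypotheses to show that $N/L$, viewed as a submodule of $M/L$, is simultaneously singular and non-singular, hence zero. First I would reduce to understanding $N \cap L$ inside $N$: since $K \leq L$ is essential in $L$ and $K \leq N$, the inclusion $K \leq N \cap L$ is essential, and therefore $N \cap L$ is essential in $N$ (essentiality is transitive, and $K$ essential in $N$ forces any intermediate submodule to be essential in $N$). Consequently $N/(N\cap L)$ is a singular module, because for every $x \in N$ the right ideal $\{r \in R \mid xr \in N \cap L\}$ contains the essentiality-witnessing ideal for $N \cap L \leq N$ and is therefore essential in $R_R$.

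Next I would invoke the standard isomorphism $N/(N \cap L) \cong (N+L)/L \leq M/L$. Since $M/L$ is non-singular by hypothesis (and I would include the parenthetical remark that a direct summand $L$ gives $M/L$ isomorphic to a direct summand of $M$, which is non-singular whenever $M$ is, though here we only need $M$ non-singular to deduce $M/L$ non-singular is \emph{not} automatic — so the hypothesis is genuinely used), the submodule $(N+L)/L$ is non-singular. But it is also singular, being isomorphic to $N/(N\cap L)$. A module that is both singular and non-singular is zero, so $(N+L)/L = 0$, i.e. $N + L = L$, which is exactly $N \leq L$.

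The only subtle point — and the step I expect to require the most care — is the verification that $N \cap L$ is essential in $N$: one must check that essentiality of $K \leq L$ \emph{restricts} correctly, namely that $K \leq N\cap L$ is essential in $N \cap L$ (clear, since any nonzero submodule of $N \cap L$ is a nonzero submodule of $L$, hence meets $K$), and then that essentiality of $K$ in $N$ together with $K \leq N \cap L \leq N$ yields essentiality of $N \cap L$ in $N$ (also clear, since a nonzero submodule of $N$ meets $K \leq N \cap L$). Everything else is a routine chain of standard facts about the singular submodule; I would not belabor the computation that quotients by essential submodules are singular, citing it as well known.

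Let me also record that the hypothesis "$M$ non-singular" is used precisely to guarantee $M/L$ non-singular is available when $L$ is a direct summand, via the displayed parenthetical; in the general statement the non-singularity of $M/L$ is assumed outright, so no further argument is needed there, and the non-singularity of $M$ itself is then not strictly necessary for the conclusion $N \leq L$ — only for the parenthetical instance. I would phrase the proof so that it covers both the hypothesis "$M/L$ non-singular" directly and deduces it in the direct-summand case, then run the singular/non-singular dichotomy on $(N+L)/L$ to finish.
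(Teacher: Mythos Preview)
Your proposal is correct and follows essentially the same route as the paper's proof: establish that $N\cap L$ is essential in $N$ (using $K\leq N\cap L\leq N$ with $K$ essential in $N$), deduce that $N/(N\cap L)$ is singular, identify it with $(N+L)/L\leq M/L$ which is non-singular, and conclude it is zero. The paper's version is terser---it cites Goodearl for the singularity of a quotient by an essential submodule and omits your commentary on the role of the hypothesis that $M$ is non-singular---but the argument is the same.
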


\begin{proof}
Since $K \leq N \cap L \leq N$, we get that $N \cap L$ is essential in $N$. And, by \cite[Proposition 1.21]{Goodearl76}, $\frac{N}{N \cap L}$ is singular. On the other hand, $\frac{N}{N \cap L} \cong \frac{N+L}{L}$ is non-singular, since it is a submodule of the non-singular module $\frac{M}{L}$. Therefore, $\frac{N}{N \cap L}=0$ and, consequently, $N \leq L$.
\end{proof}

\begin{lemma}\label{l:EssentialInfR}
Let $\{e_i \mid i \in I\}$ be a family of idempotents of $R$ such that $\sum_{i \in I}e_iR$ is essential in $R_R$. Let $J \subseteq I$ and $f\in R$, an idempotent such that $\sum_{j \in J}e_jR \leq fR$. If $fe_i=0$ for each $i \in I\setminus J$, then $\sum_{j \in J}e_jR$ is essential in $fR$.
\end{lemma}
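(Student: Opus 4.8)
The plan is to verify essentiality directly: take an arbitrary nonzero element of $fR$ and shrink it inside $\sum_{j \in J}e_jR$, exploiting that $f$ acts as the identity on $fR$ while killing the $e_i$ with $i \notin J$.

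First I would record two elementary facts about idempotents. Since $f$ is idempotent, $fR = \{y \in R \mid fy = y\}$; in particular, as $fR$ is a right ideal, every product $xr$ with $x \in fR$ still satisfies $f(xr) = xr$. Moreover, for each $j \in J$ we have $e_j \in e_jR \leq fR$, hence $fe_j = e_j$. Combining this with the hypothesis $fe_i = 0$ for $i \in I \setminus J$, left multiplication by $f$ sends $\sum_{i \in I}e_iR$ onto $\sum_{j \in J}e_jR$.

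Next, to prove that $\sum_{j \in J}e_jR$ is essential in $fR$ it suffices to show that for each $0 \neq x \in fR$ there is $r \in R$ with $0 \neq xr \in \sum_{j \in J}e_jR$; indeed, if $N \leq fR$ is a nonzero submodule, choose $0 \neq x \in N$, produce such an $r$, and note $0 \neq xr \in N \cap \sum_{j \in J}e_jR$. So fix $0 \neq x \in fR$ and regard it as a nonzero element of $R_R$. Since $\sum_{i \in I}e_iR$ is essential in $R_R$, there is $r \in R$ with $0 \neq xr \in \sum_{i \in I}e_iR$, say $xr = \sum_{i \in F}e_it_i$ for some finite $F \subseteq I$ and $t_i \in R$.

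The final step is to apply $f$ on the left. On one hand $f(xr) = xr$, because $xr \in fR$. On the other hand $f(xr) = \sum_{i \in F}(fe_i)t_i = \sum_{i \in F \cap J}(fe_i)t_i = \sum_{i \in F \cap J}e_it_i$, using $fe_i = 0$ for $i \in I \setminus J$ and $fe_j = e_j$ for $j \in J$. Hence $0 \neq xr = \sum_{i \in F \cap J}e_it_i \in \sum_{j \in J}e_jR$, as required. There is no substantial obstacle in this argument; the only point to be careful about is that one multiplies on the correct (left) side, so that the hypothesis $fe_i = 0$ really does eliminate the unwanted summands, and that $xr$ remains in $fR$, so that $f(xr) = xr$.
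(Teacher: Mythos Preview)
Your proof is correct and follows essentially the same approach as the paper's own proof: pick a nonzero element of $fR$, use essentiality of $\sum_{i\in I}e_iR$ in $R_R$ to push a right multiple into that sum, and then multiply on the left by $f$ to kill the terms indexed by $I\setminus J$ while fixing the element itself. Your write-up is in fact a bit more explicit than the paper's in justifying $fe_j=e_j$ for $j\in J$ and $f(xr)=xr$, but the underlying argument is identical.
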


\begin{proof}
Let us choose an $r \in R$ such that $fr \neq 0$. Since $\sum_{i \in I}e_iR$ is essential in $R$, there exists an $s \in R$ such that $0 \neq frs \in \sum_{i \in I}e_iR$. Write 
\begin{equation}\label{e:SumIdempotents}
frs = \sum_{i \in I}e_ir_i
\end{equation}
for a family $\{r_i\mid i \in I\}\subseteq R$ satisfying that $\{i \in I\mid r_i \neq 0\}$ is finite. Multiplying this identity by $f$ on the left we obtain that
\begin{displaymath}
frs = \sum_{j \in J}fe_jr_j.
\end{displaymath}
And, as $fe_j = e_j$ for each $j \in J$, we deduce that $fsr \in \sum_{j \in J}e_jR$.
\end{proof}

\begin{lemma}\label{l:EssentialInN}
Let $R$ be a von Neumann regular ring, $N$ a non-zero right ideal of $R$ and $X$ a set of idempotent elements in $N$ such that $\sum_{e \in X}eR$ is direct. Then $X$ is contained in a set of idempotents $X'$ of $R$ such that $\sum_{e \in X'}e R$ is direct and essential in $N$.
\end{lemma}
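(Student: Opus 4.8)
The plan is to obtain $X'$ by a maximality argument and then deduce essentiality from maximality, exploiting that in a von Neumann regular ring every right ideal contains ``enough'' direct summands of $R$. Concretely, I would let $\Lambda$ be the set of all sets $Y$ of idempotents of $R$ contained in $N$ such that $X\subseteq Y$ and $\sum_{e\in Y}eR$ is a direct sum, partially ordered by inclusion.

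First I would check that $\Lambda$ is inductive. It is non-empty since $X\in\Lambda$, and it is closed under unions of chains: independence of a family of submodules is a finitary condition, so any relation $\sum_{e\in F}a_e=0$ with $F$ finite and $a_e\in eR$ already involves only finitely many members of a given chain $\mathcal C\subseteq\Lambda$, hence lives inside a single member of $\mathcal C$ and is therefore trivial; thus $\sum_{e\in\bigcup\mathcal C}eR$ is again direct. By Zorn's Lemma choose a maximal $X'\in\Lambda$; then $X\subseteq X'$, every element of $X'$ lies in $N$, and $\sum_{e\in X'}eR$ is direct.

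It remains to show $\sum_{e\in X'}eR$ is essential in $N$. Suppose not; then there is a non-zero submodule $L\leq N$ with $L\cap\sum_{e\in X'}eR=0$. Pick $0\neq a\in L$. Since $R$ is von Neumann regular there is $x\in R$ with $axa=a$, so $g:=ax$ is an idempotent with $gR=aR$. From $a\in L\subseteq N$ we get $g=ax\in aR\subseteq N$ and $gR=aR\subseteq L$, hence $gR\neq 0$ and $gR\cap\sum_{e\in X'}eR=0$; in particular $g\notin X'$. Using the elementary fact that if $A=\bigoplus_i A_i$ inside a module $M$ and $B\leq M$ satisfies $B\cap A=0$, then $B+A=B\oplus\bigoplus_i A_i$, we conclude that $\sum_{e\in X'\cup\{g\}}eR$ is direct. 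Thus $X'\cup\{g\}\in\Lambda$ strictly contains $X'$, contradicting maximality. Hence $\sum_{e\in X'}eR$ is essential in $N$, and $X'$ is the required set of idempotents.

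I do not expect a serious obstacle here; the only two points needing a little care are that $\Lambda$ is inductive (the finitary-witness observation) and the short module-theoretic fact that adjoining a submodule which meets a given direct sum trivially preserves directness, and both are routine. The only place where von Neumann regularity is genuinely used is in producing, from an arbitrary non-zero $a\in L$, an idempotent $g$ with $gR=aR$ lying in $N$.
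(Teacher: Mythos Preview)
Your proof is correct and follows essentially the same approach as the paper: both set up the Zorn's-lemma family of idempotent sets extending $X$ with direct sum, take a maximal element $X'$, and then use von Neumann regularity to produce from any non-zero element of $N$ an idempotent generator whose right ideal, by maximality, cannot be independent from $\sum_{e\in X'}eR$. The only cosmetic difference is that the paper argues essentiality directly (for each non-zero $x\in N$ it shows $xR\cap\sum_{e\in X'}eR\neq 0$) whereas you phrase it contrapositively via a complementary submodule $L$; the content is the same.
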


\begin{proof}
Consider the set
\begin{displaymath}
\mathcal L = \left\{F \subseteq N \mid \textrm{ $F$ consists of idempotents, $\sum_{e \in F}eR$ is direct and $X \subseteq F$}\right\}
\end{displaymath}
$\mathcal L$ is an inductive non-empty partially ordered set. Thus, it has a maximal element $X'$ by Zorn's lemma. Let us check that $\sum_{e \in X'}eR$ is essential in $N$. For any non-zero $x \in N$, there exists an idempotent $f$ such that $xR = fR$. By construction, the sum $\sum_{e \in X'}eR + fR$ is not direct, so there exists an $r \in R$ such that $0 \neq fr \in \sum_{e \in X'}eR$. In particular, $xR \cap \sum_{e \in X'}eR \neq 0$.
\end{proof}

\begin{theorem}\label{t:StronglyImpliesContinuousModuloTheRadical}
Let $R$ be a ring. If every compatible descending system of right coprime pairs of $R$ has a minimal lower bound, then $R/J(R)$ is left continuous.
\end{theorem}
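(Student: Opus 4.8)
The plan is to derive the left continuity of $\bar R:=R/J(R)$ from the von Neumann regularity of $\bar R$ together with the coprime‑pair hypothesis. First, the present hypothesis in particular implies the right strongly exchange property (descending chains are descending systems), so by Theorem~\ref{t:ChainsImpliesSemiregular} the ring $R$ is semiregular; thus $\bar R$ is von Neumann regular and idempotents lift along $R\to\bar R$. For a von Neumann regular ring, conditions (C2) and (C3) for the module ${}_{\bar R}\bar R$ hold automatically (a left ideal isomorphic to a direct summand is cyclic, hence a direct summand, and every finitely generated left ideal is a direct summand), so it suffices to prove that $\bar R$ satisfies (C1) on the left: every left ideal $L$ of $\bar R$ is essential in a left ideal of the form $\bar R g$ with $g^{2}=g$.

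Fix such an $L$; the idea is to capture $L$, up to essential extension, by a compatible descending system of right coprime pairs of $R$. Using the left‑hand analogue of Lemma~\ref{l:EssentialInN}, pick a set $\{\bar e_i\mid i\in I\}$ of idempotents of $\bar R$ with $\bigoplus_{i\in I}\bar R\bar e_i$ essential in $L$, and enlarge it (again by Lemma~\ref{l:EssentialInN}) to $\{\bar e_i\mid i\in I\cup I'\}$ with $\bigoplus_{i\in I\cup I'}\bar R\bar e_i$ essential in ${}_{\bar R}\bar R$. Lift the $\bar e_i$ to pairwise orthogonal idempotents $e_i\in R$, and for each finite $F\subseteq I\cup I'$ set $e_F^{(1)}=\sum_{i\in F\cap I}e_i$, $e_F^{(2)}=\sum_{i\in F\cap I'}e_i$ and $p_F=\pair{\,1-e_F^{(1)},\,1-e_F^{(2)}\,}$. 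Since $e_F^{(1)}$ and $e_F^{(2)}$ are orthogonal idempotents, the identity $1=(1-e_F^{(1)})e_F^{(2)}+(1-e_F^{(2)})$ shows that each $p_F$ is a right coprime pair; and ordering the finite subsets of $I\cup I'$ by inclusion, the generators $(1-e_F^{(1)},1-e_F^{(2)})$ together with the scalars $r_{FF'}=1-e_{F'}^{(1)}$ and $s_{FF'}=1-e_{F'}^{(2)}$ make $\{p_F\}$ into a compatible descending system (one checks $a_Fr_{FF'}=a_{F'}$ and the cocycle relations from $e_F^{(1)}e_{F'}^{(1)}=e_F^{(1)}$ for $F\subseteq F'$, and likewise for the $b$'s). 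By hypothesis this system has a minimal lower bound, which by Proposition~\ref{p:MinimalCoprimePairs} equals $\pair{g,1-g}$ for some idempotent $g\in R$.

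Now I would pass back to $\bar R$. From $\pair{g,1-g}\le p_F$ one gets $gR\subseteq(1-e_F^{(1)})R$ and $(1-g)R\subseteq(1-e_F^{(2)})R$; taking left annihilators and using $\mathbf l_R(1-e)=Re$ for idempotents, $\sum_{i\in I}Re_i=\bigcup_F Re_F^{(1)}\subseteq R(1-g)$ and $\sum_{i\in I'}Re_i\subseteq Rg$, and reducing modulo $J(R)$, $\bigoplus_{i\in I}\bar R\bar e_i\subseteq\bar R(1-\bar g)$ and $\bigoplus_{i\in I'}\bar R\bar e_i\subseteq\bar R\bar g$. Since $\bigoplus_{i\in I\cup I'}\bar R\bar e_i$ is essential in $\bar R=\bar R(1-\bar g)\oplus\bar R\bar g$ and its two halves lie respectively in the two complementary summands, the left‑hand analogue of Lemma~\ref{l:EssentialInfR} (with $h=1-\bar g$) gives that $\bigoplus_{i\in I}\bar R\bar e_i$ is essential in $\bar R(1-\bar g)$. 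Finally $\bar R$ is non‑singular (being von Neumann regular), $\bar R(1-\bar g)$ is a direct summand of ${}_{\bar R}\bar R$, and $K=\bigoplus_{i\in I}\bar R\bar e_i$ is essential both in $L$ and in $\bar R(1-\bar g)$; so Lemma~\ref{l:InclusionsInEssentialExtensions}, applied to $M={}_{\bar R}\bar R$, yields $L\subseteq\bar R(1-\bar g)$, and hence $L$ is essential in the direct summand $\bar R(1-\bar g)$. This establishes (C1), and with it the left continuity of $\bar R$.

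The delicate point — and the place where the ``systems'' hypothesis is genuinely needed rather than merely its restriction to chains — is the construction in the middle paragraph: one must produce the whole directed family $\{p_F\}$ together with the compatibility cocycles $\{r_{FF'}\}$, $\{s_{FF'}\}$ simultaneously and coherently. Lifting a single idempotent, or a chain of them, is harmless, but lifting an infinite orthogonal family of idempotents from $\bar R$ to $R$ and threading the cocycle identities through it is the real work; if a direct lift is not available, one instead runs the whole construction as a transfinite recursion in the style of the proof of Theorem~\ref{t:ChainsImpliesSemiregular}, invoking the minimal‑lower‑bound hypothesis at limit stages to turn partial independent families into single idempotents of $R$ before continuing. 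The remaining steps — the translation via annihilators and the non‑singularity argument — are routine.
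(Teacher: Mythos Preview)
Your overall strategy matches the paper's proof closely: reduce to (C1) via the von Neumann regularity of $\bar R$, build a compatible descending system of right coprime pairs $\pair{1-e_X,1-f_Y}$ indexed by pairs of finite subsets, invoke the hypothesis to obtain $\pair{g,1-g}$, and then use Lemmas~\ref{l:EssentialInfR} and~\ref{l:InclusionsInEssentialExtensions} to conclude that the given left ideal is essential in $\bar R(1-\bar g)$. The annihilator translation and the non-singularity argument in your final paragraph are essentially identical to what the paper does.

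The gap is exactly where you yourself flag it: the sentence ``lift the $\bar e_i$ to pairwise orthogonal idempotents $e_i\in R$.'' Lifting an \emph{arbitrary} (possibly uncountable) orthogonal family of idempotents through $R\to R/J(R)$ to an orthogonal family in $R$ is not a general fact you can simply quote for semiregular rings, and your suggested transfinite recursion ``in the style of Theorem~\ref{t:ChainsImpliesSemiregular}'' is too vague to stand as a proof---that recursion produces strictly descending chains to reach a contradiction, not orthogonal lifts, and it is unclear how the minimal-lower-bound hypothesis would manufacture the missing orthogonality at limit stages. The paper sidesteps the issue entirely: it lifts each idempotent of $\bar R$ to \emph{some} idempotent of $R$ and then invokes \cite[Lemma~13]{ZimmermannZimmermann} to conclude that the resulting family $\{Re\mid e\in E\}\cup\{Rf\mid f\in F\}$ is a \emph{local direct summand} of ${}_RR$ (every finite partial sum is a direct summand, though the generators need not be orthogonal). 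One then chooses, for each finite $X\subseteq E$, an idempotent $e_X$ with $\sum_{e\in X}Re=Re_X$; these $e_X$ are not sums of the original idempotents, but from $Re_X\subseteq Re_{X'}$ for $X\subseteq X'$ one still gets $e_Xe_{X'}=e_X$, hence $(1-e_X)(1-e_{X'})=1-e_{X'}$, which are precisely the cocycle identities you need. The coprimality of $\pair{1-e_X,1-f_Y}$ is obtained from \cite[Lemma~12]{ZimmermannZimmermann} rather than from orthogonality. With this replacement for your lifting step, the remainder of your argument goes through unchanged.
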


\begin{proof}
Denote $R/J(R)$ by $\overline R$. For every $x \in R$, let $\overline x$ be the corresponding element in $R/J(R)$. And, for each subset $X$ of $R$, denote by $\overline{X}$ the subset $\{\overline x\mid x \in X\}\subseteq \overline R$.

Let $\overline{I}$ be a left ideal of $\overline R$. Using Lemma \ref{l:EssentialInN}, and the fact that $R$ is semiregular by Theorem \ref{t:ChainsImpliesSemiregular}, we can find sets of idempotents of $R$, $E$ and $F$, such that $\{\overline R\overline e  \mid \overline e \in \overline E\}\cup \{\overline R \overline f  \mid \overline f \in \overline F\}$ is independent, $\sum_{\overline e \in \overline E}\overline R \overline e$ is essential in $\overline I$ and $\sum_{\overline e \in \overline E}\overline R \overline e +\sum_{\overline f \in \overline F}\overline R \overline f$ is essential in $\overline R$. We may apply now \cite[Lemma 13]{ZimmermannZimmermann} to conclude that $\{Re\mid e \in E\} \cup \{Rf \mid f \in E\}$ is a local direct summand of ${_R}R$. I.e., it is independent and for every finite subset $X \subseteq E \cup F$, $\sum_{e \in X}Re$ is a direct summand of $R$.

For each finite subset $X$ of $E$, fix an idempotent $e_X$ such that $\sum_{e \in X}Re=Re_X$. Similarly, fix and idempotent $f_Y$ for each finite subset $Y$ of $F$. Then, for every $X \subseteq E$ and $Y \subseteq F$ finite, $Re_X \cap Rf_Y=0$ and $Re_X + Rf_Y$ is a direct summand, so that $\pair{1-e_X,1-f_Y}$ is a right coprime pair by \cite[Lemma 12]{ZimmermannZimmermann}. It is easy to check that the family
\begin{displaymath}
\{\pair{1-e_X,1-f_Y}\mid X \subseteq E, Y \subseteq E \textrm{ are finite subsets}\}
\end{displaymath}
is a compatible descending system of right coprime pairs.

Now, we may apply our hypothesis to find a minimal lower bound $\pair{a,b}$ of the system which, by Proposition \ref{p:MinimalCoprimePairs}, must be of the form $\pair{g,1-g}$, for some idempotent element $g$. Then, $g \in (1-e_X)R$ and $1-g \in (1-f_Y)R$ for each $X \subseteq E$ and $Y \subseteq F$ finite, which implies that $e_X \in R(1-g)$ and $f_Y \in Rg$. In particular, $\overline e \in \overline R(1-\overline g)$ and $\overline f \in \overline R \overline g$ for each $e \in E$ and $f \in F$. Since this implies that $\overline f (1-\overline g)=0$ for each $f \in F$, Lemma \ref{l:EssentialInfR} claims that $\sum_{e \in E}\overline R \overline e$ is essential in $\overline R (1-\overline g)$. Finally, $\overline I$ is essential in $\overline R (1-\overline g)$ by Lemma \ref{l:InclusionsInEssentialExtensions}.

We have just proved that $\overline R$ satisfies (C1). Since $\overline R$ is von Neumann regular by Theorem \ref{t:ChainsImpliesSemiregular}, it trivially satisfies (C2). Therefore, $\overline R$ is left continuous.
\end{proof}

We use some ideas in the proof of the preceding result to show that the strong exchange property is not left-right symmetric.

\begin{example}\label{e:NonSymmetric}
Let $D$ be a division ring and $V$ a vector space over $D$ with infinite countable dimension. Denote by $S$ the endomorphism ring of $V$. Then $S$ is left strongly exchange but not right strongly exchange.
\end{example}

\begin{proof}
It is well known, e. g. \cite[Proposition 2.23]{Goodearl76}, that $S$ is right self-injective. By Theorem \ref{t:CotorsionAreStrongly}, $S$ is left strongly exchange.

Let $\{v_n \mid n < \omega\}$ be a basis of $V$ and denote by $e_n:V \rightarrow V$ the endomorphism of $V$ satisfying $e_n(v_i)=0$ if $i \neq n$ and $e_n(v_n)=v_n$, for each $n < \omega$. Moreover, let $p:V \rightarrow V$ be the endomorphism of $V$ such that $p(e_i)=e_0$ for each $i < \omega$. As it is shown in the proof of \cite[Propostion 2.23]{Goodearl76}, $\left(\bigoplus_{n < \omega}Se_n \right) \cap Sp=0$, which in particular implies, as a consequence of Proposition \ref{p:CharacterizationCoprimePairs}, that $\pair{1-\sum_{n \leq m}e_n,1-p}$ is a right coprime pair in $S$ for each $m < \omega$. Then, it is easy to see that $\left\{\pair{1-\sum_{n \leq m}e_n,1-p}\mid m < \omega\right\}$ is a (compatible) descending chain of right coprime pairs in $S$.

We are going to prove that this chain does not have a minimal lower bound. Assume, on the contrary, that there exists an idempotent $e \in S$ such that $\pair{e,1-e} \leq \pair{1-\sum_{n \leq m}e_n,1-p}$ for each $m < \omega$. Then, $\left(\sum_{n \leq m}e_n\right)e=0$ for each $m < \omega$, which implies that $\sum_{n \leq m}e_n \in S(1-e)$. 

Now we claim that $\{(1-e)(v_n)\mid n < \omega\}$ is linearly independent. Assume, in order to get a contradiction, that there exists $m < \omega$ such that $(1-e)(v_m)=\sum_{i=1}^n(1-e)(v_{m_i})d_i$ for some $m_1 \ldots, m_n \in \omega-\{m\}$ and $d_1, \ldots, d_n \in D$. Then, since $e_m = s(1-e)$ for some $s \in S$, we get
\begin{displaymath}
v_m = e_m(v_m) = s(1-e)(v_m) = \sum_{i=1}^ns(1-e)(v_{m_i})d_i = \sum_{i=1}^ne_m(v_{m_i})d_i=0,
\end{displaymath}
which is a contradiction. This proves our claim.

Finally, this claim implies that $1-e$ is monic and, since it is idempotent, $e=0$. But $1=1-e \in S(1-p)$, which implies that $p=0$ as well. This is a contradiction.
\end{proof}

Now we give a sufficient condition for a strongly exchange ring to be semiperfect.

\begin{theorem}\label{t:semiperfect}
Suppose that $R$ is a right strongly exchange ring with countably many idempotents. Then $R$ is semiperfect.
\end{theorem}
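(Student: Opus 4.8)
The plan is to reduce everything to the statement that a right strongly exchange ring with only countably many idempotents contains no infinite orthogonal family of nonzero idempotents. Granting this, the theorem follows quickly: by Theorem~\ref{t:ChainsImpliesSemiregular} the ring $R$ is semiregular, so $\overline{R}:=R/J(R)$ is von Neumann regular and idempotents lift modulo $J(R)$; since $R$ is an exchange ring, even countable orthogonal families of idempotents lift modulo $J(R)$ (\cite{Nicholson}, or by induction passing to the corner rings $(1-e)R(1-e)$), so $\overline{R}$ inherits the property of having no infinite orthogonal family of nonzero idempotents. A von Neumann regular ring with that property is semisimple artinian (\cite{Goodearl91}), hence $\overline{R}$ is semisimple; as idempotents lift modulo $J(R)$, this means $R$ is semiperfect.

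So the real work is the claim: \emph{if $R$ is right strongly exchange and $\{e_n\mid n<\omega\}$ is an orthogonal family of nonzero idempotents of $R$, then $R$ has uncountably many idempotents.} I would prove this by producing, for each subset $A\subseteq\omega$ with $A$ and $\omega\setminus A$ both infinite, an idempotent $g_A\in R$ from which $A$ can be recovered; since there are uncountably many such $A$, this contradicts countability of the idempotents.

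To construct $g_A$: enumerate $A=\{a_1<a_2<\cdots\}$ and $\omega\setminus A=\{c_1<c_2<\cdots\}$, and put $u_m=e_{a_1}+\dots+e_{a_m}$ and $v_m=e_{c_1}+\dots+e_{c_m}$, which are idempotents with $u_mv_m=v_mu_m=0$. Then $(1-u_m)+(1-v_m)u_m=1$, so $\pair{1-u_m,1-v_m}$ is a right coprime pair, and from $1-u_{m+1}=(1-u_m)(1-e_{a_{m+1}})$ and $1-v_{m+1}=(1-v_m)(1-e_{c_{m+1}})$ it is immediate that $\{\pair{1-u_m,1-v_m}\mid m<\omega\}$ is a descending chain. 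Each member of the chain is a regular right coprime pair (its entries are idempotents), so the chain is compatible by Proposition~\ref{p:PropertiesChainsCoprimePairs}(2). Applying the strongly exchange hypothesis, this chain has a minimal lower bound, which by Proposition~\ref{p:MinimalCoprimePairs} has the form $\pair{g_A,1-g_A}$ for an idempotent $g_A$. Since this is a lower bound, $g_AR\subseteq(1-u_m)R=\mathbf{r}_R(u_m)$ and $(1-g_A)R\subseteq(1-v_m)R=\mathbf{r}_R(v_m)$ for every $m$; multiplying $u_mg_A=0$ and $v_m(1-g_A)=0$ on the left by $e_{a_i}$ respectively $e_{c_j}$ (and using $e_{a_i}u_m=e_{a_i}$ for $i\le m$, similarly for $v_m$) yields $e_ng_A=0$ for all $n\in A$ and $e_ng_A=e_n$ for all $n\notin A$.

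These identities recover $A$ from $g_A$: if $A\neq A'$ with, say, $n\in A\setminus A'$, then $e_ng_A=0$ while $e_ng_{A'}=e_n\neq0$, so $g_A\neq g_{A'}$. This proves the claim and hence the theorem. I expect the main obstacle to be precisely the point just used — extracting enough information from the minimal lower bound of the chain to tell the $g_A$ apart; everything hinges on the minimal lower bound being a lower bound of the \emph{entire} chain (so that $g_A\in\bigcap_m\mathbf{r}_R(u_m)$ and $1-g_A\in\bigcap_m\mathbf{r}_R(v_m)$) together with the description of minimal coprime pairs in Proposition~\ref{p:MinimalCoprimePairs}.
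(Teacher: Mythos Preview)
Your proof is correct and follows essentially the same strategy as the paper: for each subset $A\subseteq\omega$ with $A$ and $\omega\setminus A$ infinite, build the compatible descending chain $\{\pair{1-u_m,1-v_m}\}$ from partial sums of the orthogonal idempotents indexed by $A$ and by its complement, take a (minimal) lower bound, and distinguish the resulting idempotents using the relations $e_n g_A=0$ for $n\in A$ and $e_n g_A=e_n$ for $n\notin A$.

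The only organizational difference is where the construction lives. The paper passes to $\overline R=R/J(R)$ first, takes the orthogonal family there, and runs the chain argument with $x_n^A=1-\sum_{k\le n}\overline{e_{i_k(A)}}$, appealing to regularity of $\overline R$ to pick idempotent representatives of the lower bound. You instead lift the countable orthogonal family to $R$ at the outset (using that corner rings of exchange rings are exchange) and carry out the whole construction in $R$; this is arguably cleaner, since it makes the appeal to the strongly exchange hypothesis on $R$ completely transparent. Your reduction step (no infinite orthogonal family in $R$ $\Rightarrow$ none in $\overline R$ $\Rightarrow$ $\overline R$ semisimple $\Rightarrow$ $R$ semiperfect) is a correct and standard repackaging of what the paper does by contraposition.
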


\begin{proof}
Let us first note that $R$ is semiregular by Theorem~\ref{t:ChainsImpliesSemiregular} and that $R/J(R)$ has countably many idempotents. Suppose that $R$ is not semiperfect. Then $R/J(R)$ is not semisimple and thus, it has infinite right Goldie dimension. This means that there exists a countable infinite family $\{e_n\mid n < \omega\}$ of non-zero idempotents in $R$ such that the family $\left\{(e_n+J(R))R/J(R) \mid n < \omega\right\}$ is independent. By \cite[Proposition 2.14]{Goodearl91}, we may assume that the family $\{e_n+J(R)\mid n < \omega\}$ is orthogonal.

For each infinite subset $A\subset\omega$ such that $\omega\setminus A$ is infinite, write $A=\{i_n(A)\mid n < \omega\}$ and $\omega\setminus A=\{j_n(A) \mid n < \omega\}$. Define the idempotents
\begin{displaymath}
x_n^A=1-\sum_{k=0}^n(e_{i_k(A)} +J(R))\, \textrm{ and } \, y_n^A=1-\sum_{k=0}^n(e_{j_k(A)}+J(R))
\end{displaymath}
for any $n<\omega$. We get a descending chain of right coprime pairs, $\{\pair{x_n^A,y_n^A} \mid n < \omega\}$, for which we can find, by hypothesis, a lower bound $\pair{x^A,y^A}$. Since $R/J(R)$ is von Neumann regular, we can assume that $x^A$ and $y^A$ are idempotent elements.

We claim that if $A$ and $B$ are distinct subsets of $\omega$ such that $\omega\setminus A$ and $\omega\setminus B$ are infinite, then $x^A \neq x^B$. This will prove the result, since this would imply that the set of idempotents of $R$ is uncountable.

To prove our claim, choose an $n \in A\setminus B$. Then $n \in \omega\setminus B$ and thus, there exist $u,v < \omega$ such that $n = i_u(A)$ and $n = j_v(B)$. Using that $x^A \in x_u^AR/J(R)$ and $(e_n+J(R))x^A_u = 0$, we get that $(e_n+J(R))x^A=0$. Similarly, $(e_n+J(R))y^B=0$, since $y^B \in y_v^BR/J(R)$ and $(e_n+J(R))y_v^B=0$. Now, writing $1+J(R)=x^Br+y^Bs$ for $r,s \in R$, we get that $e_n+J(R)=(e_n+J(R))x^Br$, from which we deduce that $(e_n+J(R))x^B\neq 0$. As $(e_n+J(R))x^A=0$, we conclude that $x^A\neq x^B$.
\end{proof}

\begin{corollary}\label{countable}
Any right strongly exchange ring with countably many idempotent elements and zero Jacobson radical is semisimple.
In particular, any regular left continuous ring with countably many idempotents is semisimple.
\end{corollary}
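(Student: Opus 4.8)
The plan is to obtain this as an immediate consequence of Theorem~\ref{t:semiperfect} together with the classical description of semiperfect rings. First I would invoke Theorem~\ref{t:semiperfect}: since $R$ is right strongly exchange and has only countably many idempotents, $R$ is semiperfect, which by definition means that $R/J(R)$ is semisimple and idempotents of $R/J(R)$ lift to idempotents of $R$. The remaining hypothesis $J(R)=0$ then forces $R=R/J(R)$, so $R$ itself is semisimple. I do not expect any genuine obstacle in this part, as all the work has already been done in Theorem~\ref{t:semiperfect}; the only thing to be careful about is simply recalling that ``semiperfect'' unpacks to ``$R/J(R)$ semisimple plus idempotent lifting'', after which the conclusion is automatic when the radical vanishes.

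For the ``in particular'' clause I would reduce to the first statement. A von Neumann regular ring has zero Jacobson radical: if $0\neq a\in J(R)$ and $x$ satisfies $axa=a$, then $ax$ is a nonzero idempotent lying in $J(R)$, which is impossible. Moreover, a left continuous ring is right strongly exchange by the Corollary following Theorem~\ref{t:ContinuousHaveCompatibleLowerBounds}. Hence a regular left continuous ring with countably many idempotents satisfies all the hypotheses of the first assertion (right strongly exchange, countably many idempotents, $J(R)=0$), and is therefore semisimple. Alternatively, one could note that such a ring is semiperfect by Theorem~\ref{t:semiperfect} and that a semiperfect von Neumann regular ring is semisimple, but routing through the first statement keeps the argument shortest.

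The only potential subtlety I foresee is making sure the definitional chain ``semiperfect $\Rightarrow$ $R/J(R)$ semisimple'' is stated precisely, since the body of the paper has emphasized \emph{semiregular} rather than \emph{semiperfect}; I would add a one-line reminder of the definition of semiperfect at the start of the proof so that the deduction from Theorem~\ref{t:semiperfect} is transparent. No transfinite construction or coprime-pair manipulation is needed here — everything rests on results already established earlier in the paper.
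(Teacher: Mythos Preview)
Your proposal is correct and follows essentially the same approach as the paper: apply Theorem~\ref{t:semiperfect} to get semiperfect, use $J(R)=0$ to conclude semisimple, and for the second assertion observe that a regular left continuous ring has zero Jacobson radical and is right strongly exchange by Theorem~\ref{t:ContinuousHaveCompatibleLowerBounds}. The paper's proof is terser (it does not spell out the definition of semiperfect or why regular rings have zero radical), but the logical structure is identical.
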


\begin{proof}
$R$ is semiperfect by the above theorem. So it is semisimple as its Jacobson radical is zero. Finally, note that, if $R$ is a left continuous von Neumann regular ring, then it satisfies these conditions by Theorem~\ref{t:ContinuousHaveCompatibleLowerBounds}.
\end{proof}

\begin{remark}
The following alternative proof of the last assertion of this corollary was communicated to us by K. Goodearl. Suppose that $R$ is a left continuous ring which is not semisimple. Then, by \cite[Corollary 2.16]{Goodearl91}, $R$ has an infinite set of non-zero orthogonal idempotents, $\{e_n\mid n < \omega\}$. Let $Q$ be the maximal left quotient ring of $R$. Since $Q$ is left self-injective, for every subset $J$ of $\omega$, there exists an idempotent $e_J$ of $Q$ such that $Qe_J$ is an injective hull of $\bigoplus_{j \in J}Qe_j$. The set $\{e_J \mid J \subseteq \omega\}$ is uncountable, which is a contradiction, since $R$ contains all the idempotent elements of $Q$ by \cite[Theorem 13.13]{Goodearl91}.
\end{remark}

We close this paper by suggesting some new lines of research. As we mentioned in the introduction, it is not known whether a module $M$ satisfying the finite exchange property does satisfy the full exchange property. On the other hand,  Warfield proved that a module $M$ satisfies the finite exchange property if and only if its endomorphism ring is exchange. Therefore, it is natural to ask whether a module $M$ satisfies the full exchange property provided its endomorphism ring is right strongly exchange. Let us call a module $M$ \textit{strongly exchange} when its endomorphism ring is right strongly exchange. The above question can rephrased as follows:

\begin{question}
Does every strongly exchange module satisfy the full exchange property?
\end{question}

A ring $R$ is called \textit{clean} when every element in $R$ is the sum of an idempotent and a unit. Suppose that $R$ satisfies that compatible descending systems of right coprime pairs have minimal lower bounds. Then, by Theorems \ref{t:ChainsImpliesSemiregular} and \ref{t:StronglyImpliesContinuousModuloTheRadical}, $R$ is semiregular and $R/J(R)$ is left continuous. By \cite[Theorem 3.9]{CamilloKhuranaLamNicholsonZhou}, $R/J(R)$ is clean and by \cite[Proposition 7]{CamilloYou} $R$ is clean as well. Moreover, note that all the examples of right strongly exchange rings exhibited in this paper are clean (for instance, left self injective, left cotorsion, local or left continuous rings). We may pose then the following question:

\begin{question}
Is every right strongly exchange ring clean?
\end{question}

\bigskip

{\bf Acknowledgement.} The authors would like to thank Prof. Pere Ara for several helpful comments and remarks. They also want to thank the referee for several comments and suggestions that improved the quality of this paper. 
	
\bibliographystyle{plain} \bibliography{references}

\end{document}